\newtheorem{thm}{Theorem}
\newtheorem{Def}{Definition}
\newtheorem{Cor}{Corollary}
\newtheorem{Lemma}{Lemma}
\newtheorem{Assumption}{Assumption}
\renewcommand{\vec}[1]{\mathbf{#1}}
\def\X{{\vec{X}}}
\def\Y{{\vec{Y}}}
\begin{document}

\begin{frontmatter}
	\title{WIKS: A general Bayesian nonparametric index for quantifying differences between two populations}
	
	\runtitle{Bayesian Nonparametric two-sample problem}
	
	\begin{aug}
		
		\author{
			\fnms{Rafael} \snm{de Carvalho Ceregatti} \thanksref{addr1} \ead[label=e1]{rafaelceregattii@gmail.com}
		},
		\author{
			\fnms{Rafael} \snm{Izbicki}\thanksref{addr1}
			\ead[label=e2]{rafaelizbicki@gmail.com}
		}
	    \and
		\author{\fnms{Luis Ernesto} \snm{Bueno Salasar}
			\thanksref{addr1} 
			\ead[label=e3]{luis@ufscar.br}
		}
		
		\address[addr1]{
			Federal University of S\~{a}o Carlos, Rod. Washington Lu\'{i}s km 235, SP-310, S\~{a}o Carlos, SP, Brazil \\
			\printead{e1,e2,e3}
		}
		\runauthor{Ceregatti et. al.}
	\end{aug}
	
\begin{abstract} 
The problem of deciding whether two samples arise from the same distribution is often the question of interest in many research investigations. Numerous statistical methods have been devoted to this issue, but only few of them have considered a Bayesian nonparametric approach. We propose a nonparametric Bayesian index (WIKS) which has the goal of quantifying the difference between two populations $P_1$ and $P_2$ based on  samples from them. The WIKS index is defined by a weighted posterior expectation of the Kolmogorov-Smirnov distance between $P_1$ and $P_2$ and,
differently from most existing approaches, can be easily computed using any prior distribution over $(P_1,P_2)$. Moreover, WIKS is fast to compute and can be justified under a Bayesian decision-theoretic framework. We present a simulation study that indicates that the  WIKS method is more powerful than competing approaches in several settings, even in multivariate settings. We also prove that WIKS is a consistent procedure and controls the level of significance uniformly over the null hypothesis. Finally, we apply WIKS  to a data set of scale measurements of three different groups of patients submitted to a questionnaire for Alzheimer diagnostic.
\end{abstract}

	\begin{keyword}
		Bayesian nonparametrics, Hypothesis testing, Two-sample problem
	\end{keyword}
	
\end{frontmatter}

\section{Introduction}
\label{sec:intro}

The ``two-sample problem'' is a key problem in statistics and consists in testing if 
two independent samples arise from the same distribution. One way of testing such hypothesis is by making use of nonparametric two-sample tests (\citealt{Mann47,Smirnov48}). The nonparametric way of approaching the two-sample problem has been regaining a lot of interest in recent years due to its flexibility in tackling different data distributions. See for instance the methods developed in \citet{gretton2012kernel, pfister2016kernel, srivastava2016raptt, wei2016direction, ramdas2017wasserstein}.

From a Bayesian nonparametric perspective, the goodness-of-fit problem of comparing a parametric null against a nonparametric alternative has received great attention (e.g., \citealt{Florens1996,Carota96,berger2001,Basu2003}). 
However, only recently the two-sample comparison problem started been addressed.
\cite{Holmes2015} presents a closed-form expression for the Bayes factor
assuming a Polya tree prior process, rejecting the null if this statistic is below a certain threshold chosen to control the type I error. Using a similar method, but relying on a permutation approach to control the type I error, \cite{chen2014} addresses the k-sample comparison problem with censored and uncensored observations.
The latter work also uses
a Polya tree prior process.
Nonetheless, the previous methods are not easily adapted to other nonparametric priors nor to multivariate data.

In this article, we develop a novel general Bayesian nonparametric index, WIKS -- the Weighted Integrated Kolmogorov-Smirnov Statistic, that has the goal of evaluating the similarity between two groups. We show how WIKS can be used to test the equality of the two populations under a fully Bayesian decision-theoretic framework. 
The method has low computational cost and is very flexible, since it can handle any dimensionality of the observables and any nonparametric prior. In order to be implemented, it only requires the user to provide (i) a distance between probability distributions (e.g., the Kolmogorov-Smirnov distance; \citet{Kolmogorov1933}) and (ii) an algorithm to sample from the posterior distribution (e.g., the stick-breaking process in the case of a Dirichlet Process; \citealt{Sethuraman94}).

The remaining of the paper is organized as follows. In Section \ref{sec_index} we present the definition of the WIKS index, the testing procedure based on it and its decision-theoretical justification. A theoretical analysis of WIKS's properties
is shown in Section \ref{sec:asymptotic}. Section \ref{power_study} presents a simulation study designed to compare our proposal with other tests from the literature. Section \ref{sec:multivariate} shows how the index can be applied
to a multivariate setting. In Section \ref{sec:application} we apply our method to a data set on scale measurements for Alzheimer disease. Section \ref{sec:conclusions} contains our final remarks. All proofs are shown in Appendix
\ref{apendix:proofs}.

\section{The nonparametric Bayesian WIKS index}
\label{sec_index}

Assume that two independent samples $X_1, \cdots, X_n$ and $Y_1,\cdots, Y_m$ are drawn from $P_1$ and $P_2$, respectively. For a given distance $d$ between probability measures\footnote{Common choices for this metric are the Kolmogorov-Smirnov metric, the L2 metric and L\'{e}vy metric. For a survey of metrics between probability measures see \cite{Rachev2013}.}, testing the null hypothesis $H_0: P_1 = P_2$ against $H_1: P_1 \neq P_2$ is equivalent to testing $H_0: d(P_1, P_2) = 0$ against $H_1: d(P_1, P_2) > 0$. Denote by $\mathbb{P}^{x, y}$ the posterior distribution of $(P_1, P_2)$ given the observed samples $x = (x_1, \ldots, x_n)$ and $y = (y_1, \ldots, y_m)$. 

The WIKS index is
defined as follows. 
\begin{Def} \label{def_wiks}
The WIKS index against hypothesis $H_0$ is defined by
\begin{equation}
\text{WIKS}(\mathcal{D}_{n,m}) = \int_0^M w(\varepsilon) \ \mathbb{P}^{x, \, y}\big(d(P_1, P_2)>\varepsilon \big) d\varepsilon, \label{WIKS1}
\end{equation}
where $\mathcal{D}_{n,m}=\{x, y\}$ denotes the two observed samples of sizes $n$ and $m$, $w:[0, M) \longrightarrow (0, \infty]$ (the weight function) is a probability density function over $[0, M)$ and $M=\sup_{P_1,P_2}d(P_1, P_2)$ is the maximum value (possibly being $+ \infty$) of the distance $d$.
\end{Def}

\begin{figure}[H]
	\centering
	\includegraphics[width= 0.7 \linewidth]{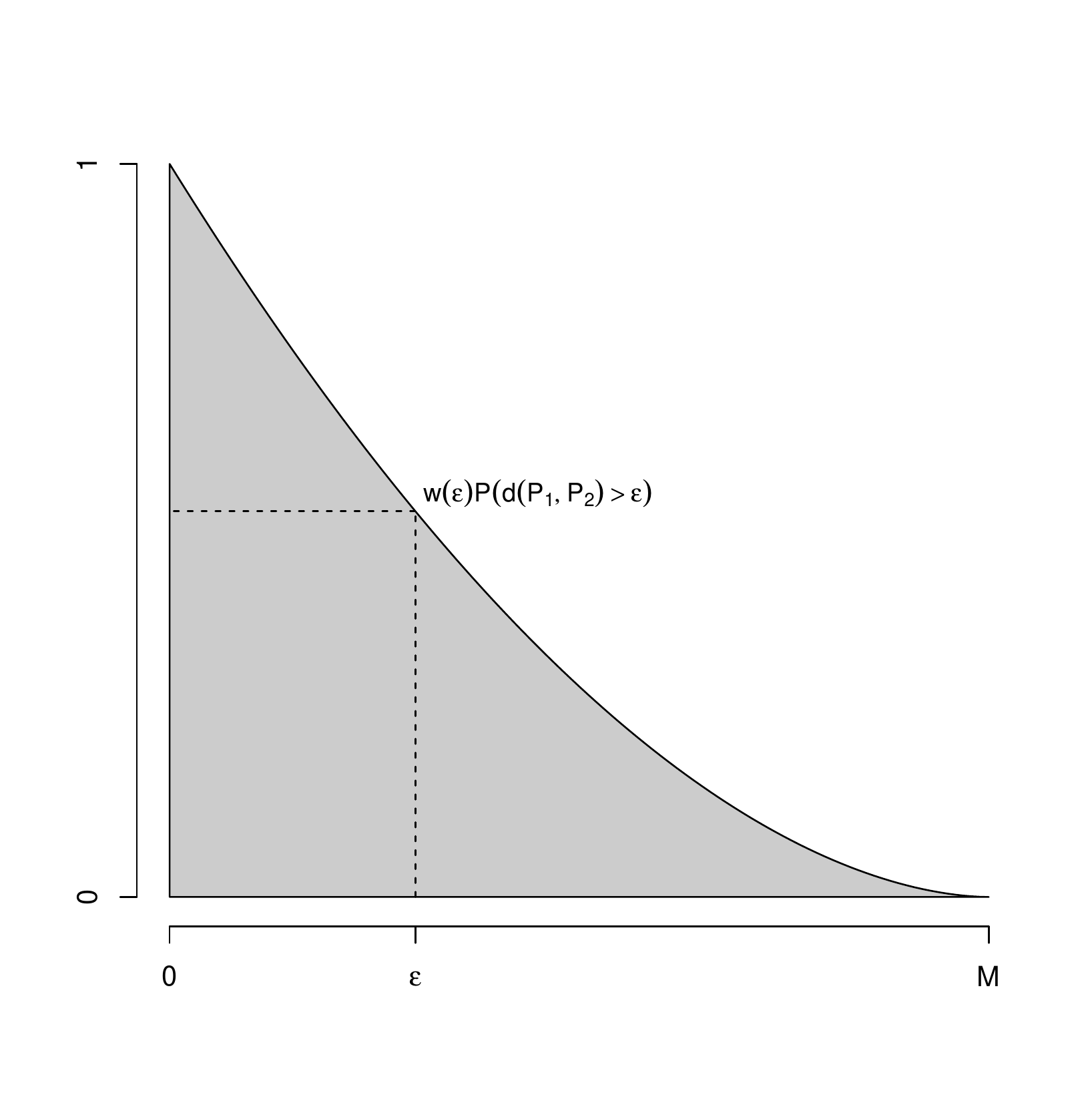}
    \caption{Geometric interpretation of the WIKS index.}
	\label{Fig-surv-dist}
\end{figure}

A geometric interpretation 
of WIKS is displayed in Figure \ref{Fig-surv-dist}.
WIKS can be thought of as a compromise between different evidence indexes against the null $H_0$. 
More specifically, a naive evidence index against the null is $P^{x,y}(d(P_1,P_2)>\varepsilon)$ for a fixed $\varepsilon > 0$, where larger values indicate greater evidence against the null. Thus, one can decide to reject the null whenever that probability exceeds a given threshold $\delta$ (e.g., 0.5)\footnote{This approach was suggested by e.g. \citet{swartz1999} in a Bayesian nonparametric goodness-of-fit context.}. However, choosing an appropriate $\epsilon$ value 
is typically not easy, especially in a nonparametric framework. Moreover, it can
also lead to inconsistent decisions: for instance, suppose that the actual distance between $P_1$ and $P_2$ is  $\varepsilon'$ in $(0, \varepsilon)$, then $P^{x,y}(d(P_1,P_2)>\varepsilon)$ converges to $0$ as the sample sizes increase (since the posterior of $d(P_1,P_2)$ converges to $\varepsilon'$) leading one to wrongly accept the null. Instead of fixing an $\varepsilon$ value, WIKS combines all the evidences $P^{x,y}(d(P_1,P_2)>\varepsilon)$ for different $\varepsilon$ using the weighted average given in \eqref{WIKS1}. Notice that, by choosing a constant weight function $w$, WIKS index \eqref{WIKS1} is proportional to the area below the survival curve of $d(P_1, P_2)$, which is the posterior expected value of $d(P_1, P_2)$. Different choices of the weight function can be considered depending on the specifics of the problem at hand.

Next, we investigate some properties of WIKS.
 
\begin{thm} \label{teo_wiks}
Let $\mathbb{E}^{x, y}$ 
denote the expectation with respect to $\mathbb{P}^{x, y}$. Then,
\begin{equation}
\text{WIKS}(\mathcal{D}_{n,m}) = \mathbb{E}^{x, y}\big[W(d(P_1, P_2)\big], \label{WIKS2}
\end{equation}
where $W$ is the cumulative distribution of the weight function $w$. 
\end{thm}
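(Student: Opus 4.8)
The plan is to recognize the defining integral of WIKS as an iterated integral and to interchange the order of integration via Tonelli's theorem, since every quantity involved is nonnegative. Writing the posterior survival probability as an expectation of an indicator, namely $\mathbb{P}^{x,y}(d(P_1,P_2)>\varepsilon) = \mathbb{E}^{x,y}[\mathbf{1}\{d(P_1,P_2)>\varepsilon\}]$, the definition \eqref{WIKS1} becomes
\begin{equation*}
\text{WIKS}(\mathcal{D}_{n,m}) = \int_0^M w(\varepsilon)\, \mathbb{E}^{x,y}\big[\mathbf{1}\{d(P_1,P_2)>\varepsilon\}\big]\, d\varepsilon .
\end{equation*}
Because $w \ge 0$ and the indicator is nonnegative, Tonelli's theorem permits swapping the Lebesgue integral over $\varepsilon$ with the posterior expectation, giving
\begin{equation*}
\text{WIKS}(\mathcal{D}_{n,m}) = \mathbb{E}^{x,y}\Big[\int_0^M w(\varepsilon)\, \mathbf{1}\{d(P_1,P_2)>\varepsilon\}\, d\varepsilon\Big] .
\end{equation*}

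Next I would evaluate the inner integral pointwise, for a fixed realization of $(P_1,P_2)$. Setting $D = d(P_1,P_2)$, the indicator $\mathbf{1}\{D>\varepsilon\}$ equals $1$ precisely on $\{\varepsilon < D\}$, so the integrand is supported on $[0,\min(D,M))$. Since $M=\sup_{P_1,P_2} d(P_1,P_2)$ bounds the distance, we have $D\le M$ almost surely under the posterior, hence $\min(D,M)=D$ and
\begin{equation*}
\int_0^M w(\varepsilon)\, \mathbf{1}\{D>\varepsilon\}\, d\varepsilon = \int_0^{D} w(\varepsilon)\, d\varepsilon = W(D),
\end{equation*}
where $W$ is the cumulative distribution function of $w$. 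Substituting back yields $\text{WIKS}(\mathcal{D}_{n,m}) = \mathbb{E}^{x,y}[W(d(P_1,P_2))]$, which is exactly \eqref{WIKS2}.

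The only delicate point is the interchange of integral and expectation, but this is entirely routine here: all integrands are nonnegative and measurable, so Tonelli applies with no integrability hypothesis, and the possibility $M=+\infty$ causes no difficulty since the $\varepsilon$-integral runs over $[0,M)$ with a nonnegative integrand in either case. One should also confirm the joint measurability of $(\varepsilon,(P_1,P_2))\mapsto \mathbf{1}\{d(P_1,P_2)>\varepsilon\}$, which follows from the measurability of $(P_1,P_2)\mapsto d(P_1,P_2)$ that is implicitly assumed when forming the posterior distribution of the distance. With these observations in place the argument is complete; I do not expect any substantive obstacle beyond bookkeeping these measure-theoretic conditions.
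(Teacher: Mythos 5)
Your proof is correct and follows essentially the same route as the paper's: both express the posterior survival probability as the integral (expectation) of an indicator, interchange the order of integration by Fubini--Tonelli, and evaluate the inner integral $\int_0^{D} w(\varepsilon)\, d\varepsilon = W(D)$. Your explicit attention to nonnegativity (so that Tonelli needs no integrability hypothesis) and to joint measurability is a slight refinement of the paper's appeal to Fubini, but the argument is the same.
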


Theorem \ref{teo_wiks} shows that WIKS  can be expressed as the expected value of $W(d(P_1, P_2))$
with respect to the posterior distribution.
This implies that  a Monte Carlo approximation for WIKS is readily available from posterior simulations of $(P_1, P_2)$. A description of such procedure is given in Algorithm \ref{alg1}.

\begin{algorithm}[H]
\caption{WIKS computation} \label{alg1}
\begin{algorithmic}[1]
  \REQUIRE {samples $x$ and $y$ of sizes $n$ and $m$; posterior distribution   $\mathbb{P}^{x, \, y}(P_1,P_2)$; \linebreak cumulative weight function $W$; number of Monte Carlo simulations $S$}
  \ENSURE $\text{WIKS}(\mathcal{D}_{n,m})$
  \vspace*{0.5 cm}
  \STATE Sample $(P_{1, 1}, P_{2, 1}), \ldots, (P_{1, S}, P_{2, S})$ independently from the posterior distribution $\mathbb{P}^{x, y}$;
  \STATE Approximate WIKS index by
  $$
  \text{WIKS}(\mathcal{D}_{n,m}) \approx \dfrac{1}{S} \sum_{s=1}^S W(d(P_{1, s}, P_{2, s}))
  $$
\end{algorithmic} 
\end{algorithm}

WIKS also has desirable properties for an index against the null hypothesis, which are presented in Theorem \ref{index.proper}.

\begin{thm} \label{index.proper}
WIKS satisfies:
\begin{enumerate}[(a)]
\item $0 \leq \text{WIKS}(\mathcal{D}_{n,m}) \leq 1$ for any observed sample $\mathcal{D}_{n,m}$;
\item $\text{WIKS}(\mathcal{D}_{n,m}) = 0$ if, and only if, $d(P_1, P_2) = 0$ almost surely;
\item $\text{WIKS}(\mathcal{D}_{n,m}) = 1$ if, and only if, $d(P_1, P_2) = M$ almost surely;
\item $\text{WIKS}(\mathcal{D}_{n,m})$ is increasing with respect to $d(P_1, P_2)$.
\end{enumerate}

\end{thm}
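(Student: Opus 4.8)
The plan is to argue entirely from the representation $\text{WIKS}(\mathcal{D}_{n,m}) = \mathbb{E}^{x,y}[W(d(P_1,P_2))]$ furnished by Theorem \ref{teo_wiks}, reducing all four claims to structural properties of $W$, the cumulative distribution of the weight density $w$. First I would record the facts about $W$ that drive everything: since $w$ is a probability density on $[0,M)$, the function $W(t)=\int_0^t w(\varepsilon)\,d\varepsilon$ is absolutely continuous (hence continuous), non-decreasing, satisfies $W(0)=0$ and $\lim_{t\to M^-}W(t)=1$, and takes values in $[0,1]$. Crucially, because $w$ is \emph{strictly} positive (it maps into $(0,\infty]$), $W$ is in fact strictly increasing: for any $s<t$ in $[0,M)$ we have $W(t)-W(s)=\int_s^t w>0$. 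These observations hold even when $w$ attains the value $+\infty$, since being a density forces $w$ to remain integrable, so $W$ stays continuous and strictly increasing.

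Parts (a) and (d) then follow immediately. For (a), the pointwise bound $0\le W(d(P_1,P_2))\le 1$ combined with monotonicity of the posterior expectation $\mathbb{E}^{x,y}$ yields $0\le\text{WIKS}(\mathcal{D}_{n,m})\le 1$. For (d), since $t\mapsto W(t)$ is (strictly) increasing, the index $\mathbb{E}^{x,y}[W(d(P_1,P_2))]$ is a monotone increasing functional of the distance: larger values of $d(P_1,P_2)$ feed through the increasing map $W$ into larger values of the integrand, and hence into a larger expectation.

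Parts (b) and (c) are the two endpoint characterizations, and I would handle them with the same elementary probabilistic lemma applied to $Z:=W(d(P_1,P_2))$, which satisfies $0\le Z\le 1$: namely $\mathbb{E}^{x,y}[Z]=0\iff Z=0$ a.s.\ (nonnegativity), and $\mathbb{E}^{x,y}[Z]=1\iff Z=1$ a.s.\ (apply the nonnegativity fact to $1-Z$). It then remains to translate each event through $W$. Since $W$ is strictly increasing with $W(0)=0$, we have $W(t)=0$ exactly when $t=0$, so $\{W(d(P_1,P_2))=0\}=\{d(P_1,P_2)=0\}$, which gives (b). Similarly, for every $t<M$ we have $W(t)=1-\int_t^M w<1$ because $\int_t^M w>0$, whereas $W$ extends continuously to the boundary with $W(M)=1$; hence $W(t)=1$ exactly when $t=M$, so $\{W(d(P_1,P_2))=1\}=\{d(P_1,P_2)=M\}$, which gives (c).

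The one point requiring genuine care—and the main obstacle—is the endpoint $M$ in part (c): $W$ is defined only on $[0,M)$, yet $d(P_1,P_2)$ can attain its supremal value $M$. I would address this explicitly by extending $W$ continuously to $M$ via $W(M):=\lim_{t\to M^-}W(t)=1$, and by verifying the strict inequality $W(t)<1$ for all $t<M$, which rests on $w>0$ rather than merely $w\ge 0$; this is precisely the place where strict positivity of the weight function, rather than just its integrating to one, is indispensable.
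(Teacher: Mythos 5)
Your proof is correct and follows essentially the same route as the paper: everything is reduced via Theorem \ref{teo_wiks} to structural properties of $W$, with (a) from $0\le W\le 1$, (b) and (c) from the fact that a bounded nonnegative random variable has zero (resp.\ maximal) mean iff it is a.s.\ zero (resp.\ maximal), and (d) from monotonicity of $W$. If anything you are more careful than the paper, whose proof of (b)--(c) stops at ``$W(d(P_1,P_2))=0$ a.s.'' without performing the translation back to $d(P_1,P_2)=0$ that requires the strict positivity of $w$ and the continuous extension $W(M)=1$ which you supply; the only substantive stylistic difference is in (d), where the paper formalizes ``increasing in $d(P_1,P_2)$'' as stochastic dominance of posterior distance distributions (via $\mathbb{E}[D]=\int_0^\infty \mathbb{P}(D\ge x)\,dx$), whereas you argue pointwise through the increasing map $W$ --- two essentially equivalent readings of the same (somewhat informal) claim.
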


\subsection*{Decision-theoretic justification} \label{subsec::decision}

From the above, a natural decision criterion should be to reject $H_0$ whenever $\text{WIKS}(\mathcal{D}_{n,m}) > c$, for a given threshold $c$. Indeed, this procedure can be justified under a Bayesian decision framework \citep{Degroot}. In fact, let us consider $\mathbb{A} = \{0, 1\}$ the decision space, where $0$ stands for $\text{accepting } H_0$ and $1$ for $\text{rejecting } H_0$, and the loss function
\begin{align}
\label{eq:loss}
L\big((P_1, P_2), a\big) = \left\{ \begin{array}{ll}
c_0 W(d(P_1, P_2)), & \mbox{ if $a = 0$},\\
c_1[1-W(d(P_1, P_2))], & \mbox{ if $a = 1$},\end{array} \right.
\end{align} 
where $c_0$ and $c_1$ are positive real numbers representing the maximum loss when accepting and rejecting $H_0$, respectively. Observe that, if we decide to accept $H_0$, the loss function is zero if $d(P_1, P_2) = 0$ and increases with the value of $d(P_1, P_2)$. On the other hand, if we decide to reject $H_0$, then the function decreases with the value of the distance $d(P_1, P_2)$ and vanishes if $d(P_1, P_2)$ is the maximum possible value $M$. Next theorem shows that the Bayes decision is to reject the null hypothesis when WIKS is large enough.

\begin{thm} \label{dt}
The Bayes rule for the loss function \eqref{eq:loss} is given by rejecting $H_0$ if
\begin{equation}
\label{cutoff}
\text{WIKS}(\mathcal{D}_{n,m})  > c,
\end{equation}
where $c = c_1/(c_1 + c_0)$.
\end{thm}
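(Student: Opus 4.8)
The plan is to proceed by the standard Bayesian decision-theoretic argument: the Bayes rule is the action minimizing the posterior expected loss, so I would compute the posterior expected loss of each of the two available actions and compare them. The key simplification is that, by Theorem \ref{teo_wiks}, the posterior expectation of $W(d(P_1,P_2))$ is precisely $\text{WIKS}(\mathcal{D}_{n,m})$, which reduces the whole comparison to a one-line inequality.

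First I would write down the posterior expected loss of accepting $H_0$ (the action $a=0$). By the definition of the loss \eqref{eq:loss} and linearity of the posterior expectation $\mathbb{E}^{x,y}$,
$$\mathbb{E}^{x,y}\big[L((P_1,P_2),0)\big] = c_0\, \mathbb{E}^{x,y}\big[W(d(P_1,P_2))\big] = c_0\,\text{WIKS}(\mathcal{D}_{n,m}),$$
where the last equality is Theorem \ref{teo_wiks}. Similarly, the posterior expected loss of rejecting $H_0$ (the action $a=1$) is
$$\mathbb{E}^{x,y}\big[L((P_1,P_2),1)\big] = c_1\big(1 - \mathbb{E}^{x,y}\big[W(d(P_1,P_2))\big]\big) = c_1\big(1 - \text{WIKS}(\mathcal{D}_{n,m})\big),$$
again by Theorem \ref{teo_wiks}.

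Next I would compare the two quantities. The Bayes rule rejects $H_0$ precisely when rejecting carries the smaller posterior expected loss, that is, when
$$c_1\big(1 - \text{WIKS}(\mathcal{D}_{n,m})\big) < c_0\,\text{WIKS}(\mathcal{D}_{n,m}).$$
Rearranging, and dividing by $c_0 + c_1 > 0$, yields $\text{WIKS}(\mathcal{D}_{n,m}) > c_1/(c_0+c_1) = c$, which is exactly \eqref{cutoff}.

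There is no substantive obstacle here: once Theorem \ref{teo_wiks} is invoked, the argument is a direct minimization over the two-point decision space $\mathbb{A}=\{0,1\}$. The only point requiring a small amount of care is the boundary case $\text{WIKS}(\mathcal{D}_{n,m}) = c$, at which the two actions yield equal posterior expected loss; I would fix a tie-breaking convention (for instance, accepting $H_0$) so that the rejection region is the strict inequality stated in the theorem. I would also note that finiteness of both posterior expectations, and hence well-definedness of the comparison, is guaranteed by part (a) of Theorem \ref{index.proper}, since $W(d(P_1,P_2)) \in [0,1]$ is bounded.
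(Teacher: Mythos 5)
Your proposal is correct and follows essentially the same route as the paper's proof: compute the posterior expected loss of each action, identify the common term as $\text{WIKS}(\mathcal{D}_{n,m})$ via Theorem \ref{teo_wiks}, and rearrange the comparison to obtain the threshold $c_1/(c_0+c_1)$. Your additional remarks on tie-breaking at the boundary and on finiteness of the expectations are sensible refinements but not substantive departures.
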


\section{Asymptotic properties}
\label{sec:asymptotic}
In this section we prove
that (i) the distribution of
WIKS is approximately invariant over
$H_0$, (ii) the WIKS statistic is consistent, and (iii)
the hypothesis testing procedure
based on WIKS is consistent.
We make the following assumptions:

\begin{Assumption}
	\label{assump::bounded}
$P_1, P_2 \sim DP(K, G)$
are
 independent Dirichlet processes, and  
there exists a measure  $\nu_1$
	that dominates
	$G$ such that $g(x):=\frac{dG}{d\nu_1}(x)\leq C$ for some $C>0$.
\end{Assumption}

\begin{Assumption}
	\label{assump::uniform}
$W(\epsilon)=\epsilon,$ $0 \leq \epsilon \leq 1$,
i.e., a uniform weighting is used for WIKS.
\end{Assumption}

Let
$\mathbb{D}_{n,m}=\{X_1, \ldots, X_n, Y_1, \ldots, Y_m\}$
and 
$$
Z^{n,m}(\mathbb{D}_{n,m}) = \sup_{x \in \mathbb{R}} \Bigg \lvert\dfrac{1}{K + n} \sum_{i=1}^n I_{(-\infty, x]}(X_i) - \dfrac{1}{K + m} \sum_{j=1}^m I_{(-\infty, x]}(Y_j) \Bigg \rvert.
$$

The following corollaries are
proven in Appendix \ref{apendix:proofs}.

\begin{Cor}[Approximate invariance over $H_0$]
\label{cor:invariant}
Under $H_0: P_1=P_2$ and under Assumptions \ref{assump::bounded}
and \ref{assump::uniform}, 
if $n,m\longrightarrow \infty$ in a way such that 
$m/(n+m)\longrightarrow \tau$ for some $0<\tau<1$, then
for every $0 \leq x \leq 1$
$$P\left(\mbox{WIKS}(\mathbb{D}_{n,m}) \leq F^{-1}_{Z^{n,m}(\mathbb{U})}(x) \right)  \xrightarrow{n,m \longrightarrow \infty} x,$$
where 
$Z^{n,m}(\mathbb{U})$ is the distribution of 
$Z^{n,m}(\mathbb{D}_{n,m})$ when both samples come from a
$\mbox{Unif}(0,1)$ distribution.
\end{Cor}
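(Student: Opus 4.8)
The plan is to reduce the statement to a comparison of two distribution functions and then to invoke a quantile-convergence argument. First, Assumption \ref{assump::uniform} together with Theorem \ref{teo_wiks} lets me rewrite the index as $\text{WIKS}(\mathbb{D}_{n,m}) = \mathbb{E}^{x,y}[d(P_1,P_2)]$, the posterior mean of the Kolmogorov--Smirnov distance (here $W(d)=d$ on the range $[0,1]$ of $d$). Under the Dirichlet process prior the posterior of $P_1$ is $DP\big(K+n, \bar{F}_n^X\big)$ with $\bar{F}_n^X = (KG + \sum_i \delta_{X_i})/(K+n)$, so that $\mathbb{E}^{x,y} F_{P_1} = \bar{F}_n^X$ and the quantity $\frac{1}{K+n}\sum_i I_{(-\infty,x]}(X_i)$ is exactly the empirical part of this posterior mean; the analogous statement holds for $P_2$. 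Thus $Z^{n,m}(\mathbb{D}_{n,m})$ is a pure data functional built from the very objects that determine the posterior mean, which is what makes it the natural reference for WIKS.

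Second, I would establish that $Z^{n,m}$ is distribution-free over the null. Since $Z^{n,m}(\mathbb{D}_{n,m})$ is a supremum over $x$ of a difference of normalized counting functions of the two samples, applying the monotone probability integral transform $x \mapsto F(x)$, where $F$ is the common continuous law under $H_0$, maps the $X_i,Y_j$ to i.i.d.\ $\text{Unif}(0,1)$ variables and leaves the supremum unchanged. Hence, for every continuous $F$, the law of $Z^{n,m}(\mathbb{D}_{n,m})$ under $H_0$ coincides exactly with that of $Z^{n,m}(\mathbb{U})$; in particular $F^{-1}_{Z^{n,m}(\mathbb{U})}$ is the true quantile function of the null statistic, and the stated limit is unchanged if I replace $Z^{n,m}(\mathbb{U})$ by $Z^{n,m}(\mathbb{D}_{n,m})$.

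The core step is to show that, under $H_0$ and Assumptions \ref{assump::bounded}--\ref{assump::uniform}, WIKS and $Z^{n,m}(\mathbb{D}_{n,m})$ are asymptotically equivalent, i.e.\ that $\sqrt{n+m}\,\text{WIKS}$ and $\sqrt{n+m}\,Z^{n,m}$ converge in distribution to a common continuous limit. For this I would exploit the concentration of the Dirichlet process posterior: as $K+n,K+m\to\infty$ the draws $F_{P_1},F_{P_2}$ concentrate around $\bar{F}_n^X,\bar{F}_m^Y$, the boundedness of $g$ in Assumption \ref{assump::bounded} providing the uniform control needed to ensure that the prior base measure $G$ contributes only an $O\big((n+m)^{-1}\big)$ term, negligible at the $\sqrt{n+m}$ scale. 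Writing $d(P_1,P_2) = \sup_x\big|(\bar{F}_n^X-\bar{F}_m^Y)(x) + \text{(posterior fluctuation)}(x)\big|$ and integrating over the posterior, the plan is to show that this posterior mean is governed, at the $\sqrt{n+m}$ scale, by the same Gaussian (Brownian-bridge) limit that drives $Z^{n,m}$, with $m/(n+m)\to\tau$ fixing the limiting variances. This is the delicate part and the main obstacle: the posterior spread of $F_{P_1}-F_{P_2}$ is of the same $(n+m)^{-1/2}$ order as the signal in $Z^{n,m}$, so one must argue carefully, via the Beta marginals of the Dirichlet process and a maximal/tightness bound for the associated empirical-process terms, that taking the posterior expectation of the supremum does not perturb the limiting law.

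Finally, I would close with a standard quantile-convergence lemma: if the distribution functions of $\sqrt{n+m}\,\text{WIKS}$ and of $\sqrt{n+m}\,Z^{n,m}(\mathbb{U})$ both converge to the same continuous, strictly increasing limit $\Phi$, then $F_{\text{WIKS}}\big(F^{-1}_{Z^{n,m}(\mathbb{U})}(x)\big) \to \Phi(\Phi^{-1}(x)) = x$ for every $x\in[0,1]$, by Polya's theorem (uniform convergence to a continuous limit) together with continuity of $\Phi^{-1}$. Combining this with the distribution-freeness established in the second step yields the stated convergence.
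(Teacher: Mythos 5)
Your first two steps and your closing step match the paper's machinery: the reduction of WIKS to the posterior mean of $d(P_1,P_2)$ under Assumption \ref{assump::uniform}, the exact distribution-freeness of $Z^{n,m}(\mathbb{D}_{n,m})$ under $H_0$ via the probability integral transform (Theorem \ref{thm::boundDistanceExpected}(i),(iii)), and the final quantile-convergence passage $F_Z\bigl(F_Z^{-1}(x)\bigr)=x$ are all present in the paper's argument. The genuine gap is the core step, which you announce but do not execute. You propose to show that $\sqrt{n+m}\,\mbox{WIKS}$ and $\sqrt{n+m}\,Z^{n,m}$ share a common continuous weak limit, and you yourself flag the obstruction: the posterior spread of $F_{P_1}-F_{P_2}$ is of the same order $(n+m)^{-1/2}$ as the signal. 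That observation is fatal to the route as stated. Writing $d(P_1,P_2)=\sup_x|A(x)+B(x)|$ with $A$ the data-fixed difference of posterior means and $B$ the mean-zero posterior fluctuation, Jensen gives $\mathbb{E}[\sup_x|A+B|]\geq\sup_x|A|$, and the excess is generically of order $\mathbb{E}[\sup_x|B(x)|]\asymp (n+m)^{-1/2}$, i.e.\ it survives at exactly the scale at which you need the two statistics to agree. So ``taking the posterior expectation of the supremum does not perturb the limiting law'' is the whole theorem, not a technical detail, and no argument is supplied for it.

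The paper proceeds differently and more elementarily: it sandwiches WIKS non-asymptotically, $d(E[P_{1}^*],E[P_{2}^*])\leq\mbox{WIKS}\leq d(E[P_{1}^*],E[P_{2}^*])+E[d(P_{1}^*,E[P_{1}^*])]+E[d(P_{2}^*,E[P_{2}^*])]$ (Lemma \ref{lemma:lowerandupper}, by Jensen and the triangle inequality), bounds $\lvert d(E[P_{1}^*],E[P_{2}^*])-Z^{n,m}(\mathbb{D}_{n,m})\rvert$ by the explicit quantity $K|m-n|/((K+m)(K+n))$, which is $O(n^{-1})$ and hence negligible after the $\sqrt{mn/(m+n)}$ rescaling (Theorem \ref{thm::boundDistanceExpected}(ii)), and then converts each one-sided inequality between the statistics into a one-sided inequality between the corresponding probabilities, finishing with the classical limit theorem for the two-sample Kolmogorov--Smirnov statistic and Slutsky. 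The posterior-spread terms are handled separately in Lemma \ref{lemma:asconvergence}. If you want to complete your plan you should adopt this two-sided sandwich structure rather than aim for distributional equivalence at the $\sqrt{n+m}$ scale; note that even then, the side of the sandwich involving the posterior-spread terms is precisely where the quantitative control you correctly identified as delicate must be supplied.
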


In words, Corollary \ref{cor:invariant}
shows that, if $P_1=P_2$, the distribution of  
$\mbox{WIKS}(\mathcal{D}_{n,m})$ does not depend asymptotically on the value of $P_1$. This implies that 
the procedure
the hypothesis test
described in Theorem \ref{dt} approximately controls the level of significance uniformly over $H_0$.

\begin{Cor}[Consistency of the WIKS statistic]
\label{cor::consist}
Denote by $H_X$ and $H_Y$
the cumulative distribution functions of $P_1$
and $P_2$, respectively.
Under Assumptions \ref{assump::bounded}
and \ref{assump::uniform},
$$\mbox{WIKS}(\mathbb{D}_{n,m})  \xrightarrow[n,m \longrightarrow \infty]{a.s.}  \sup_{x \in \mathbb{R}}|H_X(x)-H_Y(x)|$$
\end{Cor}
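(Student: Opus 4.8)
The plan is to prove almost sure convergence of $\mbox{WIKS}(\mathbb{D}_{n,m})$ to the population Kolmogorov-Smirnov distance $\sup_x |H_X(x) - H_Y(x)|$ by combining the representation from Theorem \ref{teo_wiks} with posterior consistency of the Dirichlet process. Under Assumption \ref{assump::uniform} we have $W(\epsilon) = \epsilon$ (on the relevant range), so Theorem \ref{teo_wiks} gives $\mbox{WIKS}(\mathbb{D}_{n,m}) = \mathbb{E}^{x,y}[d(P_1,P_2)]$, the posterior mean of the Kolmogorov-Smirnov distance. The task therefore reduces to showing that this posterior mean converges almost surely to the fixed number $d(H_X, H_Y) = \sup_x|H_X(x) - H_Y(x)|$.

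First I would recall the standard conjugacy result for the Dirichlet process: if $P_1 \sim DP(K,G)$ then the posterior is $DP\big(K+n,\ \frac{K}{K+n}G + \frac{n}{K+n}\widehat{F}_n\big)$, where $\widehat{F}_n$ is the empirical measure of $X_1,\dots,X_n$, and analogously for $P_2$. This is exactly why the centering object in the statement is $Z^{n,m}$, whose two terms $\frac{1}{K+n}\sum_i I_{(-\infty,x]}(X_i)$ and $\frac{1}{K+m}\sum_j I_{(-\infty,x]}(Y_j)$ are the empirical-measure pieces of the posterior base measures. The key mechanism is that, as $n,m\to\infty$, the posterior distributions of $P_1$ and $P_2$ concentrate around $H_X$ and $H_Y$ respectively. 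I would make this quantitative for the CDFs: the posterior mean CDF of $P_1$ is $\frac{K}{K+n}G(x) + \frac{n}{K+n}\widehat{H}_X(x)$, which by the Glivenko-Cantelli theorem converges uniformly and almost surely to $H_X(x)$; the $\frac{K}{K+n}G$ term vanishes uniformly since it is bounded by $K/(K+n)\to 0$.

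The next step is to transfer concentration of the posterior over $(P_1,P_2)$ to concentration of $\mathbb{E}^{x,y}[\sup_x|H_{P_1}(x) - H_{P_2}(x)|]$. I would split into two bounds. For the lower bound, Jensen-type reasoning on the convex functional $\sup_x|\cdot|$ together with the posterior mean CDFs converging to $H_X, H_Y$ gives that $\mbox{WIKS}$ cannot fall much below $\sup_x|H_X(x)-H_Y(x)|$. For the upper bound, I would use the reverse triangle inequality: for posterior draws $P_1, P_2$,
\begin{equation}
\big| \sup_x|H_{P_1}(x)-H_{P_2}(x)| - \sup_x|H_X(x)-H_Y(x)| \big| \le \sup_x|H_{P_1}(x)-H_X(x)| + \sup_x|H_{P_2}(x)-H_Y(x)|,
\end{equation}
and then take the posterior expectation, reducing the problem to showing that the posterior expected uniform deviations $\mathbb{E}^{x}\big[\sup_x|H_{P_1}(x)-H_X(x)|\big]$ tend to zero almost surely. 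Here Assumption \ref{assump::bounded}, guaranteeing a dominating measure with bounded density $g \le C$, controls the prior-base-measure contribution and lets me bound the residual fluctuation of the Dirichlet-process draw around its mean uniformly in $x$.

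The main obstacle I expect is controlling the supremum over the uncountable index $x$ inside a posterior expectation — that is, bounding $\mathbb{E}^{x}\big[\sup_x|H_{P_1}(x)-H_X(x)|\big]$ rather than a pointwise quantity. Pointwise, each $H_{P_1}(x)$ has a Beta marginal under the Dirichlet process whose variance is $O(1/(K+n))$, but exchanging the supremum and the expectation requires a uniform argument, essentially a Glivenko-Cantelli statement for Dirichlet-process posterior draws. I would handle this by a bracketing or monotonicity argument exploiting that CDFs are monotone and uniformly bounded in $[0,1]$: partition the range into finitely many levels, control the posterior deviation at the finitely many crossing points, and use monotonicity to fill the gaps, letting the mesh shrink as $n,m$ grow. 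Combining the two bounds, and invoking the Borel-Cantelli-type summability that comes from the $O(1/(K+n))$ variance decay, yields the almost sure convergence claimed.
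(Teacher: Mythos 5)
Your proposal is correct and its skeleton coincides with the paper's: the paper also writes $\mbox{WIKS}$ as the posterior mean of $d(P_1,P_2)$ (Theorem \ref{teo_wiks} with $W(\epsilon)=\epsilon$), sandwiches it via Jensen's inequality on the convex functional $\sup_x|\cdot|$ from below and the triangle inequality from above between $d(E_{\mathbb{D}_{n,m}}[P_1^*],E_{\mathbb{D}_{n,m}}[P_2^*])$ and that quantity plus the posterior dispersions $E_{\mathbb{D}_{n,m}}[d(P_i^*,E_{\mathbb{D}_{n,m}}[P_i^*])]$ (Lemma \ref{lemma:lowerandupper}), identifies the posterior mean CDFs through Dirichlet-process conjugacy and relates their Kolmogorov distance to the empirical statistic $Z^{n,m}$ up to an error $K|m-n|/((K+m)(K+n))$ (Theorem \ref{thm::boundDistanceExpected}), and finishes with Glivenko--Cantelli (Lemma \ref{lemma:consistKS}). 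The one place you genuinely diverge is the control of the posterior expected uniform deviation, i.e.\ the analogue of Lemma \ref{lemma:asconvergence}. The paper bounds $\sup_x|P_{1,n}(x)-G_n(x)|$ by $\bigl(\sqrt{3}\int(P_{1,n}-G_n)^2\,dG_n\bigr)^{2/3}$ (a Donoho--Liu-type inequality), pushes the expectation inside via Jensen and Tonelli, and uses Assumption \ref{assump::bounded} (bounded base density) plus the pointwise variance $O(1/(K+n))$ and dominated convergence. You instead propose a bracketing/monotonicity (finite-grid) argument exploiting that posterior draws are CDFs: control the $O(1/(K+n))$ pointwise fluctuation at finitely many grid points and fill the gaps by monotonicity. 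That is a perfectly valid and arguably more elementary route; it also appears not to need the bounded-density part of Assumption \ref{assump::bounded} at all, which is precisely where that hypothesis enters the paper's argument.

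Two small imprecisions worth fixing. First, your closing appeal to ``Borel--Cantelli-type summability from the $O(1/(K+n))$ variance decay'' is both unnecessary and not literally available ($1/n$ is not summable): the posterior dispersion term is a deterministic function of the data that tends to zero by your grid bound, so no Borel--Cantelli is needed, and the almost-sure qualifier in the conclusion comes entirely from Glivenko--Cantelli applied to the empirical CDFs inside $Z^{n,m}$ (equivalently, to the posterior mean CDFs). Second, when you bound $\big|\sup_x|H_{P_1}-H_{P_2}|-\sup_x|H_X-H_Y|\big|$ by the two uniform deviations and take posterior expectations, you should split each deviation as $\sup_x|H_{P_i}-E[H_{P_i}]|+\sup_x|E[H_{P_i}]-H_{\cdot}|$ before invoking the grid argument, since the Beta-marginal variance computation centers the draw at its posterior mean, not at the true CDF; the second piece is then handled by Glivenko--Cantelli together with the vanishing $K/(K+n)$ prior contribution, exactly as you describe earlier in the proposal.
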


Corollary \ref{cor::consist} implies that under $H_0$, WIKS index converges to zero
as the sample size grows, while if $P_1 \neq P_2$, it converges to a strictly positive number, which is the Kolmogorov-Smirnov distance between the two cumulative distribution functions.

\begin{Cor}[Consistency of the test procedure]
\label{cor::consist2}
Consider the hypothesis test procedure given by
$$\phi_{n,m}(\mathbb{D}_{n,m})=1 \iff \mbox{WIKS}(\mathcal{D}_{n,m}) \geq F^{-1}_{Z^{n,m}(\mathbb{U})}(1-\alpha),$$
where 
$Z^{n,m}(\mathbb{U})$ is the distribution of 
$Z^{n,m}(\mathbb{D}_{n,m})$ when both samples come from a
$\mbox{Unif}(0,1)$ distribution.
Assume that
$n,m\longrightarrow \infty$ in a way such that 
$m/(n+m)\longrightarrow \tau$ for some $0<\tau<1$.
Under Assumptions \ref{assump::bounded}
and \ref{assump::uniform},
$$P\left(\phi_{n,m}(\mathbb{D}_{n,m})=1\right) \xrightarrow{n,m \longrightarrow \infty} \alpha$$
if $H_0$ holds and 
$$P\left(\phi_{n,m}(\mathbb{D}_{n,m})=1\right) \xrightarrow{n,m \longrightarrow \infty} 1$$
if $H_1$ holds.
\end{Cor}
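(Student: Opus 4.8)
The plan is to treat the two assertions separately: the level statement under $H_0$ follows almost immediately from Corollary \ref{cor:invariant}, while the power statement under $H_1$ is obtained from Corollary \ref{cor::consist} once I understand the behaviour of the random critical value $t_{n,m} := F^{-1}_{Z^{n,m}(\mathbb{U})}(1-\alpha)$.

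For the level under $H_0$, I would apply Corollary \ref{cor:invariant} with $x = 1-\alpha$, which gives $P(\text{WIKS}(\mathcal{D}_{n,m}) \leq t_{n,m}) \to 1-\alpha$, hence $P(\text{WIKS}(\mathcal{D}_{n,m}) > t_{n,m}) \to \alpha$. Since $\{\phi_{n,m}=1\} = \{\text{WIKS}(\mathcal{D}_{n,m}) \geq t_{n,m}\} \supseteq \{\text{WIKS}(\mathcal{D}_{n,m}) > t_{n,m}\}$, this already yields $\liminf P(\phi_{n,m}=1) \geq \alpha$. For the matching upper bound I would invoke Corollary \ref{cor:invariant} at $x_\epsilon = 1-\alpha-\epsilon$: by monotonicity of the quantile function, $t_{n,m}(x_\epsilon) \leq t_{n,m}$, and if this inequality is strict for large $n,m$ then $\{\text{WIKS}(\mathcal{D}_{n,m}) \geq t_{n,m}\} \subseteq \{\text{WIKS}(\mathcal{D}_{n,m}) > t_{n,m}(x_\epsilon)\}$, whence $\limsup P(\phi_{n,m}=1) \leq \alpha+\epsilon$; letting $\epsilon \downarrow 0$ gives $\limsup \leq \alpha$, and the two bounds combine to the claimed limit $\alpha$. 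Both the strict monotonicity of the reference quantiles and the irrelevance of the boundary event $\{\text{WIKS}(\mathcal{D}_{n,m}) = t_{n,m}\}$ rest on the continuity of the limiting law underlying $Z^{n,m}(\mathbb{U})$, namely that of the (rescaled) two-sample Kolmogorov--Smirnov statistic, whose limit is the continuous Kolmogorov distribution.

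For the power under $H_1$, the crucial observation is that $t_{n,m} \to 0$. The reference variable $Z^{n,m}(\mathbb{U})$ is built from two independent $\mathrm{Unif}(0,1)$ samples, so by Glivenko--Cantelli the two empirical cumulative distribution functions converge uniformly to the same limit while the weights $n/(K+n)$ and $m/(K+m)$ tend to $1$; hence $Z^{n,m}(\mathbb{U}) \to 0$ a.s., and in particular in probability. Consequently, for every $\epsilon>0$ one has $F_{Z^{n,m}(\mathbb{U})}(\epsilon) \to 1 > 1-\alpha$, so by the definition of the quantile $t_{n,m} \leq \epsilon$ for all large $n,m$, that is $t_{n,m} \to 0$. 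On the other hand, Corollary \ref{cor::consist} gives $\text{WIKS}(\mathbb{D}_{n,m}) \to \delta := \sup_{x}|H_X(x)-H_Y(x)|$ a.s.; under $H_1$ we have $P_1 \neq P_2$, hence $H_X \neq H_Y$, and since a cumulative distribution function determines the law, $\delta>0$. Fixing any $\eta \in (0,\delta)$, eventually $t_{n,m} < \eta$, so $P(\phi_{n,m}=1) = P(\text{WIKS}(\mathbb{D}_{n,m}) \geq t_{n,m}) \geq P(\text{WIKS}(\mathbb{D}_{n,m}) \geq \eta) \to 1$, the last convergence holding because $\text{WIKS}(\mathbb{D}_{n,m}) \to \delta > \eta$ in probability.

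I expect the only genuine work to lie in controlling the data-dependent critical value $t_{n,m}$: showing it vanishes under $H_1$ via the Glivenko--Cantelli argument above, and, for the level statement, ruling out that $\text{WIKS}(\mathcal{D}_{n,m})$ places asymptotic mass exactly at $t_{n,m}$ so that the $\geq$ in the test agrees asymptotically with the $\leq$ appearing in Corollary \ref{cor:invariant}. Everything else is a direct substitution into the two preceding corollaries.
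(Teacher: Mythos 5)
Your argument is correct and follows essentially the same route as the paper's proof: the level statement is deduced from Corollary \ref{cor:invariant} evaluated at $x=1-\alpha$, and the power statement combines $F^{-1}_{Z^{n,m}(\mathbb{U})}(1-\alpha)\to 0$ (the paper's Lemma \ref{lemma:consistKS}, proved exactly by your Glivenko--Cantelli observation) with the convergence of $\mbox{WIKS}(\mathbb{D}_{n,m})$ to $\sup_x|H_X(x)-H_Y(x)|>0$ given by Corollary \ref{cor::consist}. Your treatment of the $H_0$ case is in fact more careful than the paper's one-line justification, since you explicitly address the mismatch between the non-strict inequality defining the test and the inequality appearing in Corollary \ref{cor:invariant}; the only minor slip is calling $t_{n,m}=F^{-1}_{Z^{n,m}(\mathbb{U})}(1-\alpha)$ ``data-dependent,'' when it is a deterministic sequence indexed by $(n,m)$.
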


Corollary \ref{cor::consist2}
show how the threshold of the hypothesis test
described in Theorem \ref{dt} can be chosen if
one desires to control  its level of significance. Moreover, it shows that this test procedure
is consistent, in the sense that
with high probability (for large sample sizes) it leads to the rejection of $H_0$ if $H_1$ holds.

\section{Power Function Study}
\label{power_study}

In this section, we perform a simulation study to compare the frequentist performance of the WIKS procedure with the well-established Kolmogorov-Smirnov (KS) and Wilcoxon (WILCOX) tests, and also with the testing procedure proposed by \cite{Holmes2015} (HOLMES), which considers the Polya tree process prior. For all tests, a nominal level $\alpha = 0.05$ was considered. The numerical calculations for the KS and WILCOX methods were performed using the standard outputs of the R \citep{R} commands ``ks.test'' and ``wilcox.test'' provided in the `stats' package. For the HOLMES method, we used the code provided by the authors at \url{http://www.stats.ox.ac.uk/~caron/code/polyatreetest/demoPolyatreetest.html}.

\subsection*{WIKS and HOLMES decision procedures}

The WIKS index is determined by the specification of a prior distribution for $P_1$ and $P_2$, a metric $d$ and a weight function $w$. In the following, we consider that $P_1$ and $P_2$ follow two independent and identical Dirichlet process $DP(K, G)$, with $K > 0$ being the concentration parameter and $G$ the base probability distribution with support in a subset $S$ of the real line $\mathbb{R}$. The probability $G$ is seen as an initial guess for the distribution of the data, while  the concentration parameter $K$ is a degree of confidence in that distribution. We set $K = 1$ and $G$ equal to the $N(0,1)$ distribution. The chosen metric $d$ is the Kolmogorov (also know as uniform) distance defined by $d(P_1, P_2) = \sup_x |P_1((-\infty, x]) - P_2((-\infty, x])|$ and, since the maximum of this distance is $1$, the weight function $w$ is taken to be the density function of a $\text{Beta}(1, 4)$ density, which has cumulative weight function $W_{\lambda}(t)=1 - (1-t)^{4}$, $t \in [0,1]$\footnote{In fact, any choice of $W_{\lambda}(t)=1 - (1-t)^{\lambda}$ with $\lambda > 0$ can be made and give similar results.}.

To define a decision rule using WIKS, we need to choose the threshold value $c$ given in \eqref{cutoff}. This choice can be made by interpreting the roles of the constants $c_0$ and $c_1$ in the loss function \eqref{eq:loss}, but this assessment is not straightforward. In this work, we adopt a different approach called by \cite{Good92} a ``bayes / non-bayes compromise'', which consists in choosing the threshold value $c$ that controls the type I error at level $\alpha$. 

In \cite{Holmes2015}, the index used to reject the null is the logarithm of the Bayes factor (LBF) for model comparison, where smaller values indicates greater evidence against $H_0$, that is, the decision procedure is to reject the null whenever $LBF < h$ for some threshold $h$. Under each hypothesis $H_0$ and $H_1$, the authors assume Polya tree process priors with a standard gaussian N(0,1) as a centering distribution for the partition specification (see \citealt[Section 3.1]{Holmes2015} for more details). Before applying the decision procedure, the data is standardized by the median and the interquartile range of the aggregated data $x$ and $y$. The threshold $h$ is also chosen using the  ``bayes / non-bayes compromise''.

In practice, we  obtain the threshold value of WIKS and HOLMES by simulating $R$ replicates of samples $x$ and $y$ (with sizes $n$ and $m$) from the same distribution $P$, calculating the index value for each replicate and then choosing the threshold value as the $1 - \alpha$ (or $\alpha$ for HOLMES) sample quantile of the $R$ index values. Notice that  Corollary \ref{cor:invariant} implies that the quantile estimate for WIKS is approximately invariant with respect to the choice of $P$.
Thus, although the threshold is obtained for a particular value
of $P$, WIKS controls type I error uniformly over $H_0$.

For both WIKS and HOLMES, we choose $P$ as the N(0,1) distribution and $R = 1,000$. The threshold values obtained were $0.7270$ for WIKS and $-0.8572$ for HOLMES.

\subsection*{Simulation Study}

We estimate the power of each method  under $8$ scenarios by simulating $1,000$ data sets $\textbf{X} = (X_1, \ldots, X_{50})$ and $\textbf{Y} = (Y_1, \ldots, Y_{50})$. The scenarios were chosen to express different types of deviations from the null, with larger values of $\theta$ representing greater deviations:
\begin{enumerate}
\item Normal Mean Shift: $\textbf{X} \sim N(0,1)$ and $\textbf{Y} \sim N(\theta,1)$, $\theta =0,\cdots, 3$
\item Normal Variance Shift: $\textbf{X} \sim N(0,1)$ and $\textbf{Y} \sim N(0, \theta)$, $\theta =1,\cdots, 4$
\item Lognormal Mean Shift: $\log\textbf{X} \sim N(0,1)$ and $\log\textbf{Y} \sim N(\theta, 1)$, $\theta = 0, \cdots, 3$
\item Lognormal Variance Shift: $\log\textbf{X} \sim N(0,1)$ and $\log\textbf{Y} \sim N(0, \theta)$, $\theta = 1, \cdots, 5$
\item Beta Symmetry: $\textbf{X} \sim Beta(1,1)$ and $\textbf{Y} \sim Beta(\theta,\theta)$, $\theta =1,\cdots, 6$
\item Gamma Shape: $\textbf{X} \sim Gamma(3,2)$ and $\textbf{Y} \sim Gamma(\theta,2)$, $\theta =3,\cdots, 6$
\item Normal Mixtures: $\textbf{X} \sim N(0,1)$ and $\textbf{Y} \sim \frac{1}{2}N(-\theta,1)+\frac{1}{2}N(\theta,1)$, $\theta = 0, \cdots, 3$
\item Tails: $\textbf{X} \sim N(0,1)$ and $\textbf{Y} \sim t(\theta^{-1})$, $\theta = 10^{-3}, \cdots, 10$.
\end{enumerate}

\begin{figure}[hp!]
	\centering
	\includegraphics[width= 0.75\linewidth]{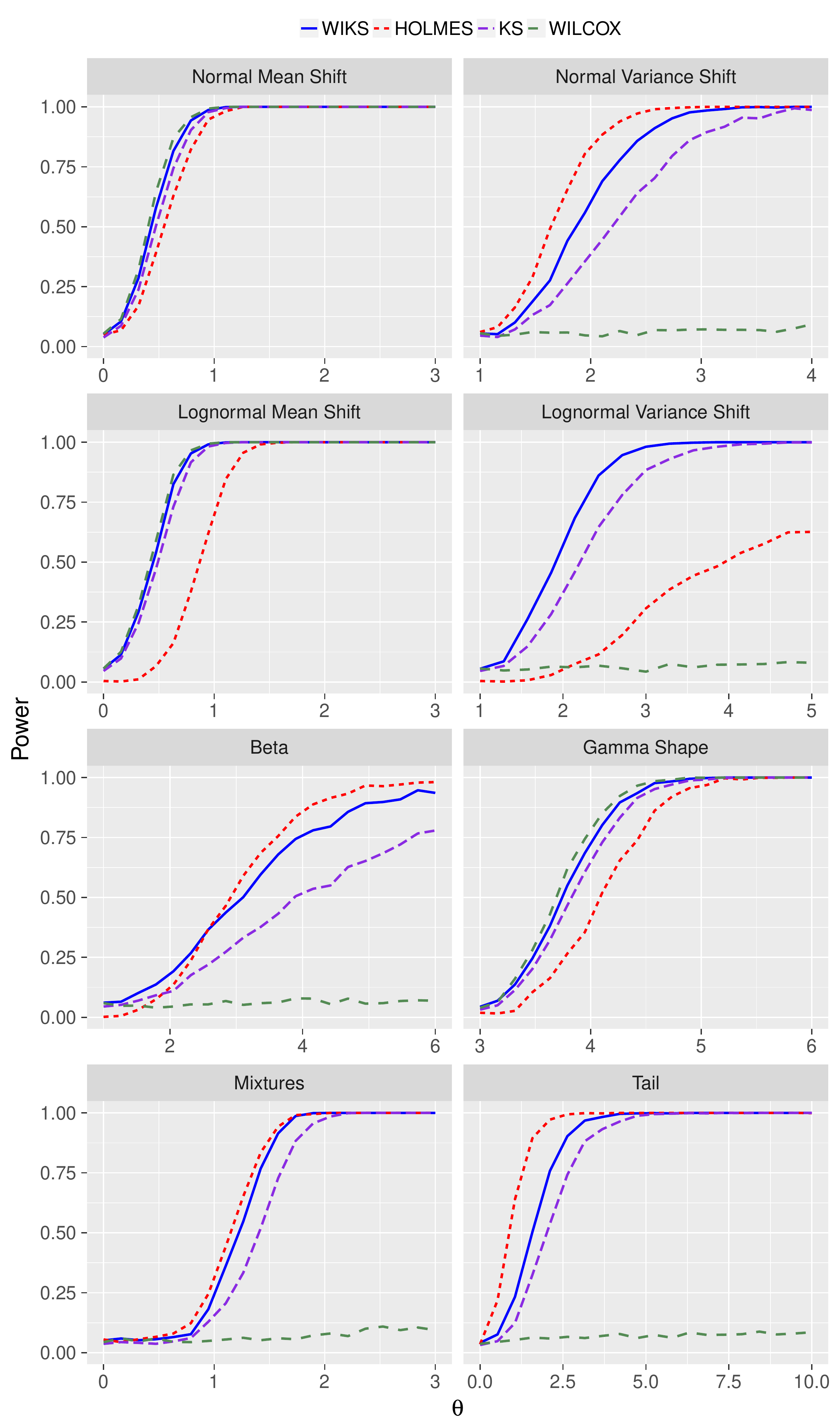}
	\caption{Power function comparison of WIKS, HOLMES, KS and WILCOX methods under  $8$ scenarios.}
	\label{Fig_power}
\end{figure}

Figure \ref{Fig_power} indicates that WIKS is very competitive in all scenarios, having uniformly greater power than KS in all situations. Also, WIKS outperforms WILCOX in all scenarios except scenarios 1, 3 and 6 (Normal Mean Shift, Lognormal Mean Shift, Gamma Shape), where they present very similar performance. When compared to HOLMES, WIKS has greater performance for scenarios 3, 4, 6 (Lognormal Mean Shift, Lognormal Variance Shift, Gamma Shape) and HOLMES wins in scenarios 2 and 8 (Normal Variance Shift, Tails). In the remaining settings both methods are comparable. 

It is also interesting to note the role of the invariance property of WIKS (Corollary \ref{cor:invariant}): while WIKS has power at the null very close to the nominal $\alpha = 0.05$
in all settings, the power of HOLMES at the null is much lower than the nominal for the settings $3$ to $6$. Possibly, this is because the support of the distribution of the data is different from the centering distribution N(0,1) used in the Polya tree process prior. Further, the latter issue implies that the cutoff determination of HOLMES is be very sensitive to the choice of the null distribution $P$ used to obtain it. To illustrate this, we present at Table \ref{table:cutoffs} the cutoff values of both methods obtained for $\alpha = 0.05$ and data $x$ and $y$ generated from the N(0,1), the U(0,1) and the LN(0,1) distributions (under the null). While WIKS cutoff values are roughly constant, HOLMES cutoff values are quite unstable. Thus, different choices of $P$ to determine the cutoff for HOLMES can cause the true level to be much larger or much smaller than the nominal.
\begin{table}[H]
\centering
\caption{Cutoffs of each method to control the type I error probability at 5\% when $x$ and $y$ are generated from the same distribution.}
\label{table:cutoffs}
\begin{tabular}{llll}
\hline
        & $N(0,1)$ &  $U(0,1)$  &  $LN(0,1)$ \\ \hline 
WIKS    & 0.7270   &  0.7337    &   0.7302  \\
HOLMES  & -0.8572  &  2.0144    &   2.7511 \\
\hline
\end{tabular}
\end{table} 

\section{Multivariate two-sample testing}
\label{sec:multivariate}

WIKS can be extended to other settings. We now explore how to use it  to compare two populations with respect to multivariate distributions.
We also explore  the fact that
the index can be computed
using \emph{any} prior
probability over the parameter space, and not only the Dirichlet process.

Let
$\X_1,\ldots,\X_n$
be a multivariate i.i.d. random 
vector drawn from $P_1$ and
$\Y_1,\ldots,\Y_m$ be a
multivariate i.i.d. random 
vector drawn from $P_2$.
Our goal is to test
$H_0: P_1=P_2$.
We assume that $\X_i,\Y_i \in \mathbb{R}^d$.
Let $d(P_1,P_2)$  be a distance between the multivariate
distributions $P_1$ and
$P_2$. For instance,
$d(P_1, P_2)$ may be the
multivariate Kolmogorov metric, defined by 
$$d(P_1, P_2) = \sup_{x_1,\ldots,x_d}|P_1\left(\Pi_{i=1}^d(-\infty, x_i]\right) - P_2\left(\Pi_{i=1}^d(-\infty, x_i]\right)|.$$

We use the same formulation of
WIKS
as described in Section  \ref{sec_index} to test $H_0$, i.e., $WIKS(\mathcal{D}_{n,m}) = \mathbb{E}^{x, y}\big[W(d(P_1, P_2)\big]$.
Notice that 
the distance function 
$d(P_1,P_2)$ is still a (real) random variable, and therefore the weighting function
$w(\epsilon)$  has the same interpretation
as before. 

 Figure \ref{fig::extension}
compares the power  of WIKS
against the KDE test for multivariate
two-sample testing \citep{duong2012closed}.
In this experiment,
the first sample consists in 100
sample points from a
$N((0,0),\Sigma)$ distribution. The second sample consists in 
100 sample points from a $N((\theta,\theta),\Sigma)$. 
While the left panel uses the covariance matrix
\[
\Sigma=\begin{bmatrix}
   1       & 0 \\
    0     & 1
\end{bmatrix}
\] 
the right panel
consists in using
\[
\Sigma=\begin{bmatrix}
   1       & 0.5 \\
    0.5     & 2
\end{bmatrix}
\]

Two versions of WIKS are used:
the first one uses a Dirichlet Process
as a prior for $P_1$ and $P_2$
with two independent standard gaussian distributions as a base measure
and $K=1$. The second versions uses a mixture of Gaussians as a prior for $P_1$ and $P_2$, with the default values from package \verb|mixAK| \citep{komarek2014mixak}.
All thresholds of the decision procedures were chosen
so as to guarantee a significance level of 5\%.

The figure shows that both versions of WIKS have better performance than the
KDE test in both settings, which suggests that WIKS is a promising approach for multivariate 
two-sample testing.
Moreover, in this case both
prior distributions lead to similar results, with the
mixture of Gaussians being
marginally better.

\begin{figure}[hp!] \centering
	 \subfloat[Bivariate example  without covariance]{\includegraphics[scale=0.33]{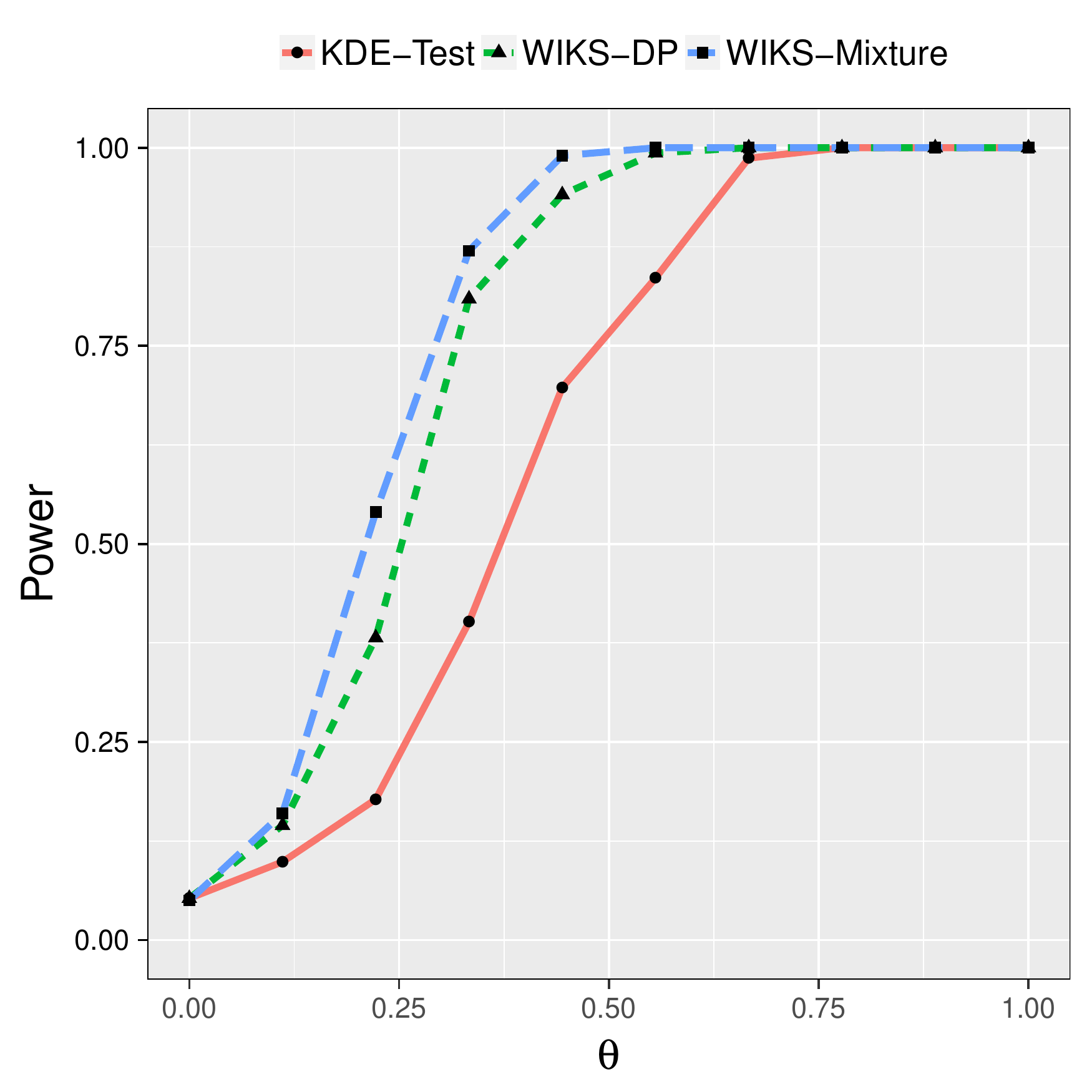}} 
	 \subfloat[Bivariate example with covariance]{\includegraphics[scale=0.33]{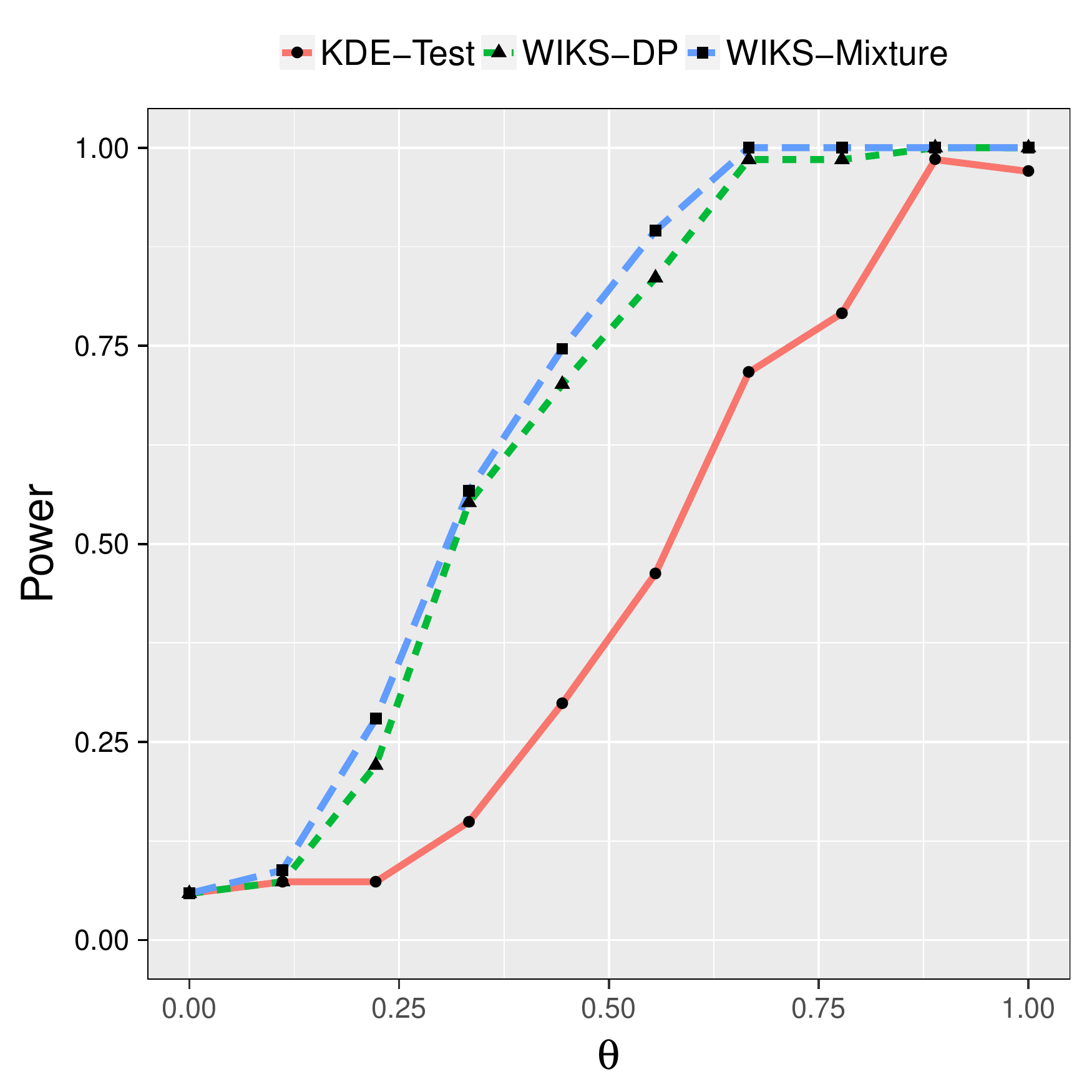}}     
	 \caption{Power comparison of two-sample multivariate testing.}
	 \label{fig::extension}
\end{figure}

\section{Application} \label{sec:application}

We apply our methods to a data set of three groups of patients (CG: the control group, MCD: with mild cognitive decline and AD: with Alzheimer's disease) submitted to a questionnaire for Alzheimer's disease diagnostic (CAMCOG). More details on this dataset can be obtained in \cite{Cecato2016}. The main idea is to quantify the differences between the groups using our methods.

\begin{figure}[hp!]
	\begin{center}
		\includegraphics[width = .8\linewidth]{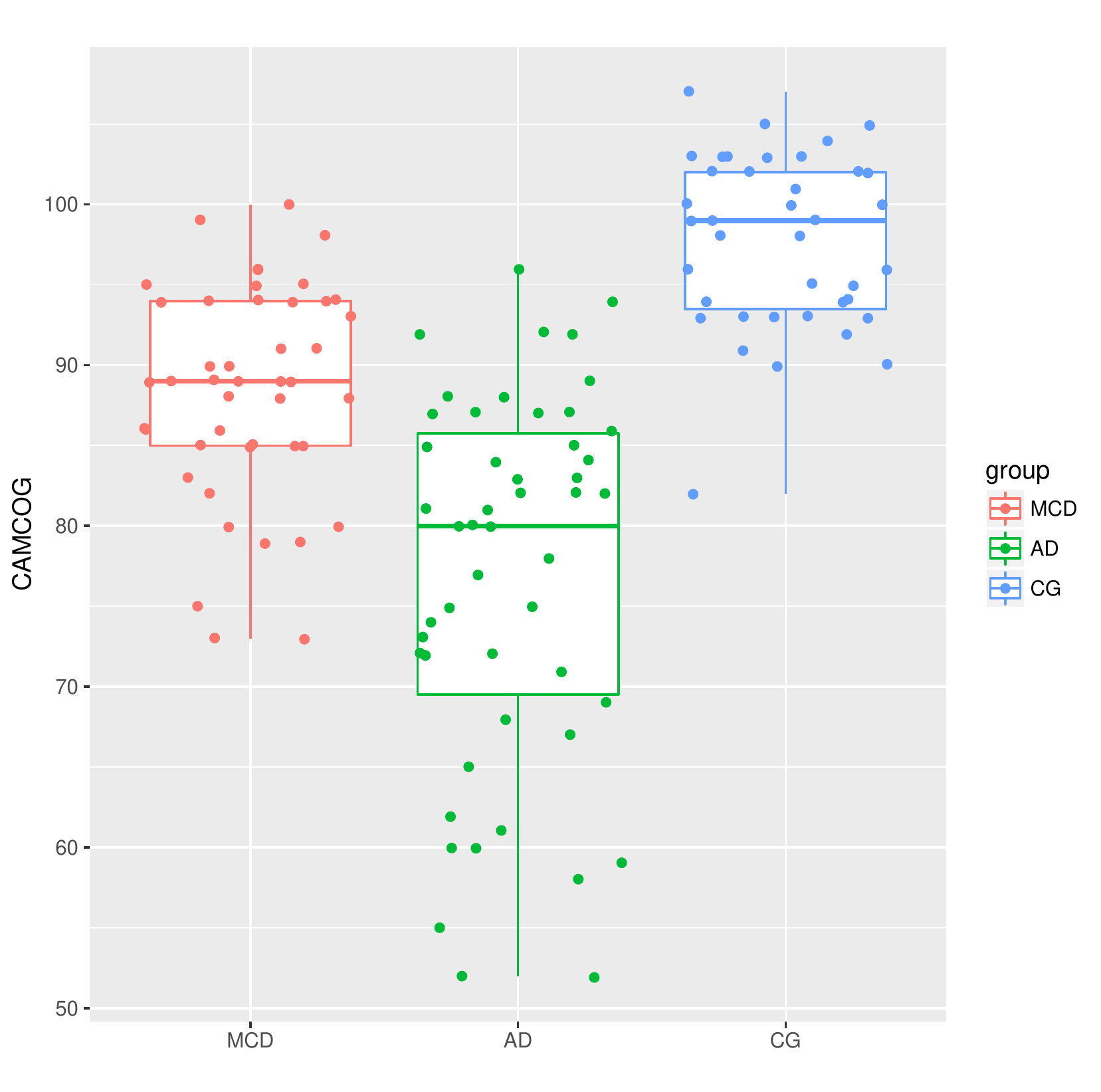}
		\caption{Boxplot of CAMCOG scores for the groups MCD, AD and CG.}
		\label{anadesdados}
	\end{center}
\end{figure}

 Figure \ref{anadesdados} shows  that all groups present different behavior with respect the score obtained from CAMCOG. The group with Alzheimer's disease (AD) has the lowest CAMCOG scores and the control group (CG) the highest ones. The group with mild cognitive decline (MCD) has score values in-between the other two groups. Thus, it is expected that the WIKS index will be greater when comparing AD and CG groups than for the other comparisons. In fact, for AD vs CG, CG vs MCD and MCD vs AD the WIKS index are $0.9993, 0.9629, 0.9312$ with respective thresholds $0.7558$, $0.7681$ and $0.7314$, leading to the rejection of  null for all pairwise comparisons. From this analysis, we conclude that CAMCOG is an useful tool for initial diagnostic of Alzheimer disease, being able to properly distinguish between the three groups.

\section{Conclusions} \label{sec:conclusions}

We propose a method to compare two populations $P_1$ and $P_2$ that relies on a Bayesian nonparametric discrepancy index (WIKS) defined as a weighted average of the posterior survival function of the Kolmogorov distance $d(P_1, P_2)$. The WIKS index can also be expressed as the posterior expectation in terms of $d(P_1, P_2)$, which makes it easier to compute its value  using  samples of the posterior distribution.  The WIKS definition can be seen as an aggregated evidence against the null and the proposed decision procedure is the Bayes rule under a suitable loss function. A key advantage of WIKS method is that it controls the type I error probability uniformly over $H_0$. Moreover, we proved that the proposed WIKS statistic and the decision procedure are both consistent.

In a power function simulation study, WIKS presents better performance than the well-established Wilcoxon and Kolmogorov-Smirnov tests. When compared to the method proposed by \citet{Holmes2015}, WIKS shows similar performance in many settings and is superior when the support of data are restricted to the positive real numbers or the unitary interval. For a data-set on questionaire scores used for Alzheimer diagnose applied to 3 groups,
WIKS could correctly indentify the difference between the groups.

We conclude that  WIKS is a powerful and  flexible method to compare populations with
low computational cost. Even thought we have chosen the Dirichlet Process as our prior, any other nonparametric (e.g, the Polya tree or the Beta processes) or even parametric prior could be used without the need of adjustments: WIKS computation only requires a sampling algorithm for posterior simulation. Moreover, the dimensionality of data poses no restriction to the method, since it is based on the concept of distances, which always take values on the real line. Further investigation is needed to assess the effect of the choices of the metric $d$ and the weight function $w$ on the performance of the method. Future research directions are extending the methods presented here to goodness-of-fit problems and investigating the performance in high-dimensional settings.

\bibliographystyle{ba}
\bibliography{references.bib}
	
\section*{Acknowledgements}
	This work was partially supported by
	FAPESP grant  2017/03363-8.
\section*{Acknowledgements}

\appendix	
    
\section{Proofs}  
\label{apendix:proofs}  

\begin{proof}[Proof of Theorem \ref{teo_wiks}]
Let $\mathbb{P}_d$ be the probability distribution of $d(P_1, P_2)$ assuming that $(P_1, P_2)$ is distributed according to $\mathbb{P}^{x, y}$. Thus,
\begin{equation*}
WIKS(\mathcal{D}_{n,m}) = \int_0^M w(\varepsilon) \mathbb{P}_d\big((\varepsilon, M]\big) = \int_0^M \int_0^M w(\varepsilon) I_{(\varepsilon, M]}(z) d\mathbb{P}_d(z) d\varepsilon,
\end{equation*}
which implies by the Fubini theorem, that
\begin{eqnarray*}
WIKS(\mathcal{D}_{n,m}) &=& \int_0^M \int_0^M w(\varepsilon) I_{(\varepsilon, M]}(z) d\varepsilon d\mathbb{P}_d(z), \\
&=& \int_0^M \int_0^z w(\varepsilon) d\varepsilon d\mathbb{P}_d(z) \\
&=& \int_0^M W(z) d\mathbb{P}_d(z) \\
&=& E[W(d(P_1, P_2))],
\end{eqnarray*}
where $I_A(z)$ denotes the indicator function assuming $1$ if $z \in A$ and $0$ otherwise. 
\end{proof}

\begin{proof}[Proof of Theorem
\ref{index.proper}]

\vspace{0.5 cm}
\begin{enumerate}[(a)]
\item It follows directly from \eqref{WIKS2} and the fact that $W$ assumes values in $[0,1]$;
\item Since the random variable $W(d(P_1, P_2))$ is non negative, its expected value is $0$ if and only if it assumes $0$ almost surely;
\item The same argument of (b) applied to the non negative random variable $1 - W(d(P_1, P_2))$;
\item Consider $D_1$ and $D_2$ two random variables representing two posterior distributions for $d(P_1, P_2)$ such that $D_2$ is stochastically greater than $D_1$, i.e., $\mathbb{P}(D_1 \geq x) \leq \mathbb{P}(D_2 \geq x)$ for all $x > 0$. Since $\mathbb{E}[D_i] = \int_0^{\infty} \mathbb{P}(D_i \geq x) dx$, we have that $\mathbb{E}[D_1] \leq \mathbb{E}[D_2]$.
\end{enumerate}
\end{proof}

\begin{proof}[Proof of Theorem \ref{dt}]

For a decision $\delta(x, y) \in \mathbb{A}$, the posterior expected loss is given by
\[ \mathbb{E}^{x,y}\big[L\big((P_1, P_2), \delta(x,y) \big)\big]= \left\{ \begin{array}{ll}
c_0 \mathbb{E}^{x,y}[W(d(P_1, P_2))], & \mbox{ if $\delta(x,y) = 0$}, \\
c_1\big[1-\mathbb{E}^{x,y}[W(d(P_1, P_2))]\big], & \mbox{ if $\delta(x,y) = 1$}.\end{array} \right. \]
Thus, the Bayes rule is given by rejecting $H_0$ if and only if
\begin{align*}
WIKS(\mathcal{D}_{n,m}) = \mathbb{E}^{x, y}\big[W(d(P_1, P_2))\big] >  c_1/(c_1 + c_0).
\end{align*}
\end{proof}

\begin{thm}
\label{thm::boundDistanceExpected} \ 

\begin{enumerate}[(i)]
\item 
For any continuous distribution function $H$ (not necessarily being the generating mechanism associated
to $X$ or $Y$), 
$$Z^{n,m}(\mathbb{D}_{n,m})=\sup_{t \in [0,1]} \Bigg \lvert\dfrac{1}{K + n} \sum_{i=1}^n I_{[0, t]}(H(X_i)) - \dfrac{1}{K + m}\sum_{j=1}^m I_{[0, t]}(H(Y_j)) \Bigg \rvert.$$
\item 
$$\Bigg \lvert d\big(E_{\mathbb{D}_{n,m}}[P_{1}^*], E_{\mathbb{D}_{n,m}}[P_{2}^*]\big)- Z^{n,m}(\mathbb{D}_{n,m}) \Bigg \rvert  \leq \dfrac{K|m - n|}{(K + m)(K + n)}.$$
\item If both samples $X_1, \ldots, X_n$ and $Y_1, \ldots, Y_m$ have a common cumulative distribution function $H$, then the distribution of $Z^{n,m}(\mathbb{D}_{n,m})$  is invariant with respect to $H$.
\end{enumerate}
\end{thm}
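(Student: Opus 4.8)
The plan is to treat the three parts separately, relying on the probability integral transform for parts (i) and (iii) and on the explicit form of the Dirichlet process posterior mean for part (ii).

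For part (i) I would argue by the substitution $t = H(x)$. Since $H$ is a continuous distribution function, $x \mapsto H(x)$ is nondecreasing and continuous with range dense in $[0,1]$, and for $x$ in the support of $H$ the relation $X_i \le x$ is equivalent to $H(X_i) \le H(x)$. Hence the pointwise identity $I_{(-\infty,x]}(X_i) = I_{[0,\,H(x)]}(H(X_i))$ holds for every $i$ (and analogously for the $Y_j$), so the whole integrand defining $Z^{n,m}(\mathbb{D}_{n,m})$ is preserved under the transform. Taking the supremum over $x \in \mathbb{R}$ and noting that $t = H(x)$ sweeps out a dense subset of $[0,1]$ then gives the claimed reformulation. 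The step that needs care here is the change of variables inside the supremum: one must check that no value of the step-function difference is lost when passing from $x$ to $t$, which is exactly where the continuity of $H$ (and its injectivity on the observed points) is used. This is the main obstacle, since if $H$ has a flat stretch between two observations those observations are merged under the transform and the supremum can strictly decrease; for part (iii) this is harmless because flat regions carry zero $H$-mass, so the identity holds on a set of full probability.

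For part (ii) I would first recall that the posterior of $DP(K,G)$ after $n$ observations is $DP\!\big(K+n,\ \tfrac{K}{K+n}G + \tfrac{1}{K+n}\sum_{i}\delta_{X_i}\big)$, so the posterior mean measures have distribution functions
\[
\bar F_1(x) = \tfrac{K}{K+n}G(x) + \tfrac{1}{K+n}\sum_{i=1}^n I_{\{X_i \le x\}}, \qquad
\bar F_2(x) = \tfrac{K}{K+m}G(x) + \tfrac{1}{K+m}\sum_{j=1}^m I_{\{Y_j \le x\}}.
\]
Subtracting, the empirical terms reproduce precisely the difference whose supremum is $Z^{n,m}(\mathbb{D}_{n,m})$, while the base-measure terms combine into $G(x)\cdot \tfrac{K(m-n)}{(K+n)(K+m)}$. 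Writing $f=\bar F_1-\bar F_2$ and letting $g$ be the empirical difference, I would then apply the elementary inequality $\big|\sup_x|f(x)| - \sup_x|g(x)|\big| \le \sup_x|f(x)-g(x)|$ together with $|G(x)|\le 1$; since $\sup_x|g(x)| = Z^{n,m}(\mathbb{D}_{n,m})$ and $\sup_x|f(x)| = d\big(E_{\mathbb{D}_{n,m}}[P_1^*],E_{\mathbb{D}_{n,m}}[P_2^*]\big)$, the stated bound $\tfrac{K|m-n|}{(K+n)(K+m)}$ follows immediately.

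For part (iii) I would invoke part (i) with $H$ equal to the common continuous distribution function of the two samples. By the probability integral transform the transformed observations $H(X_1),\ldots,H(X_n),H(Y_1),\ldots,H(Y_m)$ are i.i.d.\ $\mathrm{Uniform}(0,1)$, so part (i) exhibits $Z^{n,m}(\mathbb{D}_{n,m})$ as a fixed measurable functional of $n+m$ i.i.d.\ uniform variables whose joint law does not depend on $H$. Consequently the distribution of $Z^{n,m}(\mathbb{D}_{n,m})$ is the same for every continuous $H$, which is the asserted distribution-freeness; as noted above, the only delicate point is that the identity of part (i) need only hold almost surely here, and it does because under a continuous $H$ no two observations coincide in a flat region with probability one.
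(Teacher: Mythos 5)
Your proof is correct and follows essentially the same route as the paper for parts (i) and (iii): the substitution $t=H(x)$ and the probability integral transform. Two points of difference are worth noting. First, for part (ii) you avoid the paper's composition with $H^{-1}$ entirely and work directly in $x$-space, packaging the estimate as the reverse triangle inequality $\bigl|\,\sup_x|f(x)|-\sup_x|g(x)|\,\bigr|\leq\sup_x|f(x)-g(x)|$ applied to $f-g = G(x)\cdot\tfrac{K(m-n)}{(K+n)(K+m)}$; this is equivalent to the paper's two separate triangle-inequality bounds but cleaner, and it sidesteps the fact that $H^{-1}$ need not be well defined when $H$ is merely continuous. Second, your caveat about flat stretches of $H$ is a genuine observation the paper glosses over: the pointwise identity $I_{(-\infty,x]}(X_i)=I_{[0,H(x)]}(H(X_i))$ used in the paper's proof of (i) can fail when $H$ is constant on an interval separating $x$ from $X_i$, and indeed (i) as stated can fail for an arbitrary continuous $H$ whose flat regions straddle data points (the supremum over $t$ can be strictly smaller). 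As you correctly argue, this does not damage part (iii), where $H$ is the common continuous c.d.f.\ of the data: the transformed observations are i.i.d.\ uniform, ties occur with probability zero, and the identity holds almost surely, which is all that the distribution-freeness claim requires.
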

\begin{proof}
\noindent (i)
Observe that
\begin{align*}
Z^{n,m}_{H}(\mathbb{D}_{n,m}) &:= \sup_{x \in \mathbb{R}} \Bigg \lvert\dfrac{n}{K + n} \dfrac{1}{n} \sum_{i=1}^n I_{(-\infty, x]}(X_i) - \dfrac{m}{K + m} \dfrac{1}{m}\sum_{j=1}^m I_{(-\infty, x]}(Y_j) \Bigg \rvert. \\
&= \sup_{x \in \mathbb{R}} \Bigg \lvert\dfrac{1}{K + n} \sum_{i=1}^n I_{[0, H(x)]}(H(X_i)) - \dfrac{1}{K + m}\sum_{j=1}^m I_{[0, H(x)]}(H(Y_j)) \Bigg \rvert. \\
&= \sup_{t \in [0,1]} \Bigg \lvert\dfrac{1}{K + n} \sum_{i=1}^n I_{[0, t]}(H(X_i)) - \dfrac{1}{K + m}\sum_{j=1}^m I_{[0, t]}(H(Y_j)) \Bigg \rvert,
\end{align*}
where in the last equality we used the continuity of $F$.

\noindent (ii)
Notice that $d\big(E[P_{1,n}^*], E[P_{2,m}^*]\big)$ can be expressed as
\begin{align*}
d\big(&E_{\mathbb{D}_{n,m}}[P_{1}^*], E_{\mathbb{D}_{n,m}}[P_{2}^*]\big) \\
&\stackrel{}{=} \sup_{x \in \mathbb{R}}  \Bigg \lvert \dfrac{K}{K + n} G(x) + \dfrac{1}{K + n} \sum_{i=1}^n I_{[0, H(x)]}(H(X_i)) -  \dfrac{K}{K + m} G(x) - \dfrac{1}{K + m}\sum_{j=1}^m I_{[0, H(x)]}(H(Y_j)) \Bigg \rvert \\
&= \sup_{x \in \mathbb{R}}  \Bigg \lvert \Bigg( \dfrac{K}{K + n} - \dfrac{K}{K + m} \Bigg) G(x) + \dfrac{1}{K + n} \sum_{i=1}^n I_{[0, H(x)]}(H(X_i)) - \dfrac{1}{K + m}\sum_{j=1}^m I_{[0, H(x)]}(H(Y_j)) \Bigg \rvert \\
&= \sup_{x \in \mathbb{R}}  \Bigg \lvert \Bigg( \dfrac{K}{K + n} - \dfrac{K}{K + m} \Bigg) (G \circ H^{-1})(H(x)) + \dfrac{1}{K + n} \sum_{i=1}^n I_{[0, H(x)]}(H(X_i)) - \dfrac{1}{K + m}\sum_{j=1}^m I_{[0, H(x)]}(H(Y_j)) \Bigg \rvert \\
&= \sup_{t \in [0,1]} \Bigg \lvert \Bigg( \dfrac{K}{K + n} - \dfrac{K}{K + m} \Bigg) (G \circ H^{-1})(t) + \dfrac{1}{K + n} \sum_{i=1}^n I_{[0, t]}(H(X_i)) - \dfrac{1}{K + m}\sum_{j=1}^m I_{[0, t]}(H(Y_j)) \Bigg \rvert.
\end{align*} 
Moreover,
\begin{align}
\label{eq:upper}
\sup_{t \in [0,1]}& \Bigg \lvert \Bigg( \dfrac{K}{K + n} - \dfrac{K}{K + m} \Bigg) (G \circ H^{-1})(t) + \dfrac{1}{K + n} \sum_{i=1}^n I_{[0, t]}(H(X_i)) - \dfrac{1}{K + m}\sum_{j=1}^m I_{[0, t]}(H(Y_j)) \Bigg \rvert  \notag \\
& \leq \sup_{t \in [0,1]} \Bigg \lvert \Bigg( \dfrac{K}{K + n} - \dfrac{K}{K + m} \Bigg) (G \circ H^{-1})(t) \Bigg \rvert
 + \sup_{t \in [0,1]} \Bigg \lvert\dfrac{1}{K + n} \sum_{i=1}^n I_{[0, t]}(H(X_i)) - \dfrac{1}{K + m}\sum_{j=1}^m I_{[0, t]}(H(Y_j)) \Bigg \rvert   \notag  \\
 &=\Bigg| \dfrac{K}{K + n} - \dfrac{K}{K + m} \Bigg|  \sup_{t \in [0,1]} \Bigg \lvert (G \circ H^{-1})(t) \Bigg \rvert
  + \sup_{t \in [0,1]} \Bigg \lvert\dfrac{1}{K + n} \sum_{i=1}^n I_{[0, t]}(H(X_i)) - \dfrac{1}{K + m}\sum_{j=1}^m I_{[0, t]}(H(Y_i)) \Bigg \rvert \notag \\
&= \dfrac{K|m - n|}{(K + m)(K + n)}
+ \sup_{t \in [0,1]} \Bigg \lvert\dfrac{1}{K + n} \sum_{i=1}^n I_{[0, t]}(H(X_i)) - \dfrac{1}{K + m}\sum_{j=1}^m I_{[0, t]}(H(Y_j)) \Bigg \rvert.
\end{align}
Also,
\begin{align}
\label{eq:lower}
\sup_{t \in [0,1]}& \Bigg \lvert \Bigg( \dfrac{K}{K + n} - \dfrac{K}{K + m} \Bigg) (G \circ H^{-1})(t) + \dfrac{1}{K + n} \sum_{i=1}^n I_{[0, t]}(H(X_i)) - \dfrac{1}{K + m}\sum_{j=1}^m I_{[0, t]}(H(Y_j)) \Bigg \rvert   \notag \\
&\geq\sup_{t \in [0,1]} \Bigg \lvert\dfrac{1}{K + n} \sum_{i=1}^n I_{[0, t]}(H(X_i)) - \dfrac{1}{K + m}\sum_{j=1}^m I_{[0, t]}(H(Y_j)) \Bigg \rvert
 - \Bigg \lvert \Bigg( \dfrac{K}{K + n} - \dfrac{K}{K + m} \Bigg) (G \circ H^{-1})(t) \Bigg \rvert    \notag  \\
&\geq \sup_{t \in [0,1]} \Bigg \lvert\dfrac{1}{K + n} \sum_{i=1}^n I_{[0, t]}(H(X_i)) - \dfrac{1}{K + m}\sum_{j=1}^m I_{[0, t]}(H(Y_j)) \Bigg \rvert 
 -   \Bigg| \dfrac{K}{K + n} - \dfrac{K}{K + m} \Bigg|  \sup_{t \in [0,1]} \Bigg \lvert (G \circ H^{-1})(t) \Bigg \rvert \notag \\
 &= \sup_{t \in [0,1]} \Bigg \lvert\dfrac{1}{K + n} \sum_{i=1}^n I_{[0, t]}(H(X_i)) - \dfrac{1}{K + m}\sum_{j=1}^m I_{[0, t]}(H(Y_j)) \Bigg \rvert 
 -   \dfrac{K|m - n|}{(K + m)(K + n)}.
 \end{align}
 
 The conclusion of (ii) follows  from Equations \ref{eq:upper} and \ref{eq:lower}.
 
 \noindent (iii) It  suffices to observe that  the hypothesis implies that $H(X_1), \ldots, H(X_n), H(Y_1), \ldots, H(Y_m) \stackrel{i.i.d}{\sim} U(0,1)$.
 
\end{proof}

\begin{Lemma}
\label{lemma:lowerandupper}
Under Assumption \ref{assump::uniform},
\begin{align*}
d(E_{\mathbb{D}_{n,m}}[P_{1}^*],E_{\mathbb{D}_{n,m}}[P_{2}^*]) \leq \mbox{WIKS}(\mathbb{D}_{n,m}) \leq  & d(E_{\mathbb{D}_{n,m}}[P_{1}^*],E_{\mathbb{D}_{n,m}}[P_{2}^*])+\\
+&E_{\mathbb{D}_{n,m}}[d(P_{1}^*,E_{\mathbb{D}_{n,m}}[P_{1}^*])] +E_{\mathbb{D}_{n,m}}[d(P_{2}^*,E_{\mathbb{D}_{n,m}}[P_{2}^*])]
\end{align*}
\end{Lemma}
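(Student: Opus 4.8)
The plan is to reduce WIKS to a posterior expected distance and then sandwich it between the two bounds using, respectively, Jensen's inequality for the lower bound and the triangle inequality for the upper bound. Under Assumption \ref{assump::uniform} we have $W(\epsilon)=\epsilon$, so Theorem \ref{teo_wiks} immediately gives
\[
\mbox{WIKS}(\mathbb{D}_{n,m}) = E_{\mathbb{D}_{n,m}}[d(P_1^*, P_2^*)],
\]
where $(P_1^*, P_2^*)$ denotes a draw from the posterior $\mathbb{P}^{x, y}$. Writing $F_i^*$ for the (random) cumulative distribution function of $P_i^*$, the first preliminary step is to record that the posterior mean measure $E_{\mathbb{D}_{n,m}}[P_i^*]$ is itself a probability measure whose cumulative distribution function is $x \mapsto E_{\mathbb{D}_{n,m}}[F_i^*(x)]$. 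This follows from Fubini's theorem (equivalently, from linearity of expectation applied to the indicator integrals defining the CDF), and the integrability it requires is immediate since all quantities are bounded by $1$.

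For the lower bound I would exploit the supremum-of-absolute-differences representation of the Kolmogorov distance together with Jensen's inequality. Concretely, for each fixed $x$,
\[
\big|E_{\mathbb{D}_{n,m}}[F_1^*(x)] - E_{\mathbb{D}_{n,m}}[F_2^*(x)]\big| = \big|E_{\mathbb{D}_{n,m}}[F_1^*(x) - F_2^*(x)]\big| \leq E_{\mathbb{D}_{n,m}}\big[|F_1^*(x) - F_2^*(x)|\big] \leq E_{\mathbb{D}_{n,m}}[d(P_1^*, P_2^*)],
\]
and taking the supremum over $x$ on the left-hand side yields $d(E_{\mathbb{D}_{n,m}}[P_1^*], E_{\mathbb{D}_{n,m}}[P_2^*]) \leq \mbox{WIKS}(\mathbb{D}_{n,m})$. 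Abstractly this is just Jensen's inequality for the convex map $(P_1,P_2) \mapsto d(P_1,P_2)$, which is a supremum of functions that are convex (indeed, each $|F_1(x)-F_2(x)|$ is convex in the pair of measures).

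For the upper bound I would apply the triangle inequality for the metric $d$ pointwise in the posterior realization: for every draw $(P_1^*, P_2^*)$,
\[
d(P_1^*, P_2^*) \leq d(P_1^*, E_{\mathbb{D}_{n,m}}[P_1^*]) + d(E_{\mathbb{D}_{n,m}}[P_1^*], E_{\mathbb{D}_{n,m}}[P_2^*]) + d(E_{\mathbb{D}_{n,m}}[P_2^*], P_2^*).
\]
Taking posterior expectations and noting that the middle term is deterministic (hence passes through $E_{\mathbb{D}_{n,m}}$ unchanged) delivers exactly the claimed upper bound.

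Since each bound reduces to a single application of a standard inequality, I do not anticipate a serious obstacle. The only point requiring genuine care is the preliminary identification of the posterior mean measure's CDF with the expectation of the random CDFs, which is where Fubini's theorem and the boundedness of the indicators enter; everything else is bookkeeping around $\sup$ and $E_{\mathbb{D}_{n,m}}$.
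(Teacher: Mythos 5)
Your proof is correct, and its overall architecture --- lower bound via a Jensen-type inequality, upper bound via the triangle inequality for $d$ applied pointwise to posterior draws and then averaged --- matches the paper's. The one place you genuinely diverge is in how the lower bound is executed. The paper introduces $g_Q(P):=\sup_x|P(x)-Q(x)|$, proves it is convex in $P$, and applies Jensen's inequality twice, first conditionally on $P_2^*$ and then unconditionally; that route explicitly invokes the posterior independence of the two processes so that the conditional mean of $P_1^*$ given $P_2^*$ equals its unconditional mean. You instead fix $x$, use linearity of expectation together with $|E[\,\cdot\,]|\le E[|\cdot|]$ and the bound $|F_1^*(x)-F_2^*(x)|\le d(P_1^*,P_2^*)$ to get $|E[F_1^*(x)]-E[F_2^*(x)]|\le E[d(P_1^*,P_2^*)]$ for each $x$, and only then take the supremum on the left. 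This is more elementary (a direct sup--expectation interchange rather than an appeal to convexity of a functional on the space of distribution functions) and slightly more general, since it never needs independence of $P_1^*$ and $P_2^*$. The upper bounds are identical in substance; the paper merely states ``follows from the triangle inequality'' while you spell out the three-term decomposition and note that the middle term is deterministic. Your preliminary identification of the CDF of the posterior mean measure with the expectation of the random CDFs is left implicit in the paper, and making it explicit as you do is a genuine improvement in rigor.
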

\begin{proof}
To prove the first inequality, let $P$ and $Q$ be two cumulative distribution functions and let
$g_{Q}(P):=\sup_x |P(x)-Q(x)|$.
$g_{Q}$ is convex. Indeed,
\begin{align*}
g_{Q}(w_1P_{1}+w_2P_{2})&=\sup_x |w_1P_{1}(x)+w_2P_{2}(x)-w_1Q(x)-w_2Q(x)|   \\
&\leq 
\sup_x |w_1P_{1}(x)-w_1Q(x)|+\sup_x |w_2P_{2}(x)-w_2Q(x)|=w_1 g_{Q}(P_{1})+w_2 g_{Q}(P_{2}).
\end{align*}
Thus, by applying Jensen's inequality twice and using the independence of the processes,
\begin{align*}
E_{\mathbb{D}_{n,m}}[d(P_{1}^*,P_{2}^*)]&=E_{\mathbb{D}_{n,m}} [g_{P_{2}^*}(P_{1}^*)]=E_{\mathbb{D}_{n,m}}[E_{\mathbb{D}_{n,m},P_{2}^*}[g_{P_{2}^*}(P_{1}^*)]]\geq E_{\mathbb{D}_{n,m}}[g_{P_{2}^*}(E_{\mathbb{D}_{n,m},P_{2}^*}[P_{1}^*])] \\
 &=E_{\mathbb{D}_{n,m}}[g_{P_{2}^*}(E_{\mathbb{D}_{n,m}}[P_{1}^*])]= 
E_{\mathbb{D}_{n,m}}[g_{E_{\mathbb{D}_{n,m}}[P_{1}^*]}(P_{2}^*)]
\\
&\geq g_{E_{\mathbb{D}_{n,m}}[P_{1}^*]}(E_{\mathbb{D}_{n,m}}[P_{2}^*])=d(E_{\mathbb{D}_{n,m}}[P_{1}^*],E_{\mathbb{D}_{n,m}}[P_{2}^*])
\end{align*}

The second inequality follows from the triangle inequality.
\end{proof}

\begin{Lemma} 
\label{lemma:asconvergence}
Under Assumptions \ref{assump::bounded}
and \ref{assump::uniform},
$$ E_{\mathbb{D}_{n,m}}[d(P_{1}^*,E_{\mathbb{D}_{n,m}}[P_{1}^*])] \xrightarrow{n \longrightarrow \infty} 0 $$
and
$$ E_{\mathbb{D}_{n,m}}[d(P_{2}^*,E_{\mathbb{D}_{n,m}}[P_{2}^*])]    \xrightarrow{m \longrightarrow \infty} 0 $$
\end{Lemma}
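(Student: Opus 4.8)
The plan is to first pin down the posterior law and then establish a concentration-around-the-mean bound for a Dirichlet process in the Kolmogorov (sup) distance. By Assumption \ref{assump::bounded} and Dirichlet-process conjugacy, the posterior of $P_1$ given $\mathbb{D}_{n,m}$ is $DP(K+n,G_n)$, where $G_n=\frac{1}{K+n}\big(KG+\sum_{i=1}^n\delta_{X_i}\big)$ is exactly the posterior mean $E_{\mathbb{D}_{n,m}}[P_1^*]$. Hence the quantity to control is $E_{\mathbb{D}_{n,m}}\big[\sup_{x}|P_1^*(x)-G_n(x)|\big]$, the expected sup-deviation of the random posterior CDF $P_1^*$ from its mean $G_n$; I want to show this tends to $0$ as $n\to\infty$ (and symmetrically for $P_2^*$ as $m\to\infty$).

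The main idea is to convert the supremum over the uncountable index set $\mathbb{R}$ into a maximum over a finite grid, exploiting that both $P_1^*$ and $G_n$ are nondecreasing. Fixing an integer $k$, I would choose quantile points $x_0<\cdots<x_k$ of $G_n$ (also evaluating the left limits $x_j^-$ to absorb any atoms of $G_n$) so that $G_n$ oscillates by at most $1/k$ on each cell. Monotonicity of the two CDFs then yields, cell by cell,
\[
\sup_{x}|P_1^*(x)-G_n(x)| \;\le\; \max_{j}\big(|P_1^*(x_j)-G_n(x_j)| \vee |P_1^*(x_j^-)-G_n(x_j^-)|\big) + \tfrac{1}{k},
\]
reducing the problem to the deviation of $P_1^*$ from its mean at $O(k)$ fixed points.

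To handle the finite maximum I would use the marginal structure of the Dirichlet process: for any set $A$, $P_1^*(A)\sim\text{Beta}\big((K+n)G_n(A),(K+n)G_n(A^c)\big)$, so each $P_1^*(x_j)$ (and each $P_1^*(x_j^-)$) has mean $G_n(x_j)$ and variance at most $\frac{1}{4(K+n+1)}$. Applying $E[\max_j|\xi_j|]\le\sqrt{\sum_j\mathrm{Var}(\xi_j)}$ to the $O(k)$ centred variables $\xi_j=P_1^*(x_j)-G_n(x_j)$ gives
\[
E_{\mathbb{D}_{n,m}}\big[\sup_{x}|P_1^*(x)-G_n(x)|\big] \;\le\; \tfrac{1}{2}\sqrt{\tfrac{2k}{K+n+1}} + \tfrac{1}{k}.
\]
Optimizing over $k$ (balancing the two terms, i.e. $k\asymp n^{1/3}$) makes the right-hand side $O(n^{-1/3})\to 0$, which proves the first claim; the argument for $P_2^*$ is identical with $m$ in place of $n$. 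Note the bound is deterministic in the data, so the convergence in fact holds along every sample path.

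The main obstacle is the supremum over the uncountable set $\{x\in\mathbb{R}\}$: a union bound over all points is hopeless, and the naive per-point variance $O(1/(K+n))$ does not by itself control the supremum. The resolution is the monotonicity of CDFs, which permits discretization with a controlled $1/k$ error, combined with the explicit Beta marginals of the DP. The remaining technical care concerns the atoms of $G_n$ (from repeated data values); I handle these by placing grid points at the jump locations and evaluating both $x_j$ and $x_j^-$, while the assumption that $G$ admits a bounded density (Assumption \ref{assump::bounded}) guarantees the base measure is atomless, so $G_n$ contributes no additional jumps beyond the $O(1/n)$ data atoms.
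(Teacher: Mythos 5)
Your proof is correct, but it proceeds along a genuinely different route from the paper's. The paper bounds the sup-distance by a weighted $L^2$ distance via the inequality $\sup_x|P_{1,n}(x)-G_n(x)|\le\bigl(\sqrt{3}\int(P_{1,n}-G_n)^2\,dG_n\bigr)^{2/3}$ (citing Donoho), then uses Jensen, the bounded-density hypothesis of Assumption \ref{assump::bounded} to pass to a dominating measure, Tonelli, the pointwise variance bound $V[P_{1,n}(x)]=O(n^{-1})$, and finally dominated convergence; no rate survives the last step. You instead run a Glivenko--Cantelli-style discretization: monotonicity of the two CDFs reduces the supremum to a maximum over $O(k)$ quantile points of $G_n$ (with left limits to absorb the data atoms), the exact $\mathrm{Beta}\bigl((K+n)G_n(A),(K+n)G_n(A^c)\bigr)$ marginals of the posterior $DP(K+n,G_n)$ give variance at most $1/\bigl(4(K+n+1)\bigr)$ at each point, and the elementary bound $E[\max_j|\xi_j|]\le\sqrt{\sum_j\mathrm{Var}(\xi_j)}$ plus the optimization $k\asymp n^{1/3}$ closes the argument. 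Your version buys two things the paper's does not: an explicit rate $O(n^{-1/3})$ that is uniform over the conditioning data $\mathbb{D}_{n,m}$, and independence from Assumption \ref{assump::bounded} altogether --- as you note, the bounded density of $G$ is not needed once atoms of $G_n$ are handled by gridding at the jumps, so the lemma holds for an arbitrary base measure $G$. (Your final remark even overstates the role of the assumption slightly: atomlessness of $G$ is irrelevant to your argument, since the quantile grid absorbs any atoms regardless of their origin.) The paper's approach is shorter on the page but leans on a nontrivial imported inequality and, arguably, on a delicate integrability point in the dominated-convergence step over all of $\mathbb{R}$; yours is longer but elementary and self-contained.
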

\begin{proof}

Because
$$\sup_x  |P_{1,n}(x)-G_n(x)| \leq \left(\sqrt{3}\int_{\mathbb{R}} (P_{1,n}(x)-G_n(x))^2 dG_n(x)\right)^{2/3}$$
\citep[page 603]{donoho1988pathologies}
and by Jensen's inequality,
\begin{align}
\label{eq:upper1}
E\left[\sup_x  |P_{1,n}(x)-G_n(x)| \right] &\leq    E\left[\left(\sqrt{3}\int_{\mathbb{R}} (P_{1,n}(x)-G_n(x))^2 dG_n(x)\right)^{2/3} \right] \notag \\
&\leq
\left( E\left[\sqrt{3}\int_{\mathbb{R}} (P_{1,n}(x)-G_n(x))^2 dG_n(x)\right]\right)^{2/3}.
\end{align}
Let  
$\nu_2$ be the counting measure. 
The Radon-Nikodym derivative of
$G_n$ with respect to $(\nu_1+\nu_2)$ is
$$g_n(x):=\frac{dG_n}{d(\nu_1+\nu_2)}(x)=\frac{K}{K+n}g(x) \mathbb{I}_{\mathbb{R} \backslash A}(x) + \frac{1}{K+n}\sum_{i=1}^n \mathbb{I}_{x_i}(x),$$
where $A = \{x_1, \ldots, x_n\}$ 
\citep[Theorem 1]{gottardo2008markov}.

Assumption \ref{assump::bounded} implies that,
for every $\mathcal{D}^x_{n}$,
$g_n(x) \leq C$. 
It follows that  the inner part of the right hand-side of Equation \ref{eq:upper1} is
\begin{align*}
&E\left[\sqrt{3}\int_{\mathbb{R}} (P_{1,n}(x)-G_n(x))^2 dG_n(x)\right]
=E\left[\sqrt{3}\int_{\mathbb{R}} (P_{1,n}(x)-G_n(x))^2 g_n(x) d(\nu_1+\nu_2)(x)\right] \\
&\leq
C\sqrt{3} E\left[\int_{\mathbb{R}} (P_{1,n}(x)-G_n(x))^2 d(\nu_1+\nu_2)(x)\right]
= C\sqrt{3} \int_{\mathbb{R}} E\left[(P_{1,n}(x)-G_n(x))^2 \right]d(\nu_1+\nu_2)(x),
\end{align*}
where the last step follows from Tonelli's theorem. 
Thus,
\begin{align}
\label{eq:uppersup}
 E_{\mathbb{D}_{n}^X}\left[  d(P^*_{1}(x),E_{\mathbb{D}_{n}^X}[P^*_{1}]) \right]&= E\left[\sup_x  |P_{1,n}(x)-G_n(x)| \right]\notag \\
 &\leq \left( C \int_{\mathbb{R}} E\left[(P_{1,n}(x)-G_n(x))^2 \right]d(\nu_1+\nu_2)(x)\right)^{2/3} 
\end{align}

Moreover, for every $\mathbb{D}_n^x$,
$$E\left[(P_{1,n}(x)-G_n(x))^2 \right]=V\left[P_{1,n}(x)\right]=O(n^{-1}),$$
so that, for every $\mathbb{D}_n^x$, 
$$\lim_{n \longrightarrow \infty}  E\left[(P_{1,n}(x)-G_n(x))^2 \right] =0.$$
Because $E\left[(P_{1,n}(x)-G_n(x))^2 \right]\leq 2$, it follows from
the dominated convergence theorem that 
$$\lim_{n \longrightarrow \infty} \int_{\mathbb{R}} E\left[(P_{1,n}(x)-G_n(x))^2 \right]d(\nu_1+\nu_2)(x) =
\int_{\mathbb{R}} \lim_{n \longrightarrow \infty}  E\left[(P_{1,n}(x)-G_n(x))^2 \right]d(\nu_1+\nu_2)(x)=0.$$
Conclude from Equation \ref{eq:uppersup} that 
$$ \lim_{n \longrightarrow \infty} E_{\mathbb{D}_{n}^X}\left[ d(P^*_{1}(x),E_{\mathbb{D}_{n}^X}[P^*_{1}]) \right] =0.
$$

The second limit is analogous.
\end{proof}

\begin{thm}
\label{thm:limitjensen}
Under Assumptions \ref{assump::bounded}
and \ref{assump::uniform},
$$d(E_{\mathbb{D}_{n,m}}[P_{1}^*],E_{\mathbb{D}_{n,m}}[P_{2}^*]) - \mbox{WIKS}(\mathbb{D}_{n,m})
\xrightarrow{n,m \longrightarrow \infty} 0 
$$
\end{thm}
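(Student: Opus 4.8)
The plan is to obtain the result by a squeeze (sandwich) argument, combining the two-sided bound of Lemma \ref{lemma:lowerandupper} with the convergence established in Lemma \ref{lemma:asconvergence}. Since these two lemmas already perform all the substantive work, the remaining task is purely a matter of bracketing the quantity of interest and letting the bounds collapse.

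First I would invoke Lemma \ref{lemma:lowerandupper}, which holds under Assumption \ref{assump::uniform} and provides the two-sided inequality
$$d(E_{\mathbb{D}_{n,m}}[P_{1}^*],E_{\mathbb{D}_{n,m}}[P_{2}^*]) \leq \mbox{WIKS}(\mathbb{D}_{n,m}) \leq d(E_{\mathbb{D}_{n,m}}[P_{1}^*],E_{\mathbb{D}_{n,m}}[P_{2}^*]) + R_{n} + R_{m},$$
where I abbreviate $R_{n} := E_{\mathbb{D}_{n,m}}[d(P_{1}^*,E_{\mathbb{D}_{n,m}}[P_{1}^*])]$ and $R_{m} := E_{\mathbb{D}_{n,m}}[d(P_{2}^*,E_{\mathbb{D}_{n,m}}[P_{2}^*])]$. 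Subtracting $\mbox{WIKS}(\mathbb{D}_{n,m})$ and rearranging, the left inequality yields $d(E_{\mathbb{D}_{n,m}}[P_{1}^*],E_{\mathbb{D}_{n,m}}[P_{2}^*]) - \mbox{WIKS}(\mathbb{D}_{n,m}) \leq 0$, while the right inequality yields $d(E_{\mathbb{D}_{n,m}}[P_{1}^*],E_{\mathbb{D}_{n,m}}[P_{2}^*]) - \mbox{WIKS}(\mathbb{D}_{n,m}) \geq -(R_{n}+R_{m})$. Together these bracket the quantity of interest between $-(R_{n}+R_{m})$ and $0$.

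Next I would apply Lemma \ref{lemma:asconvergence}, which gives, under Assumptions \ref{assump::bounded} and \ref{assump::uniform}, that $R_{n} \to 0$ as $n \to \infty$ and $R_{m} \to 0$ as $m \to \infty$. Since the first deviation term depends only on the $X$-sample and the second only on the $Y$-sample, their sum $R_{n}+R_{m}$ tends to $0$ as both $n$ and $m$ grow to infinity. The lower bound $-(R_{n}+R_{m})$ therefore converges to $0$, while the upper bound is identically $0$, so the squeeze theorem forces $d(E_{\mathbb{D}_{n,m}}[P_{1}^*],E_{\mathbb{D}_{n,m}}[P_{2}^*]) - \mbox{WIKS}(\mathbb{D}_{n,m}) \to 0$.

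Because both lemmas have already been established, there is essentially no substantive obstacle remaining; the only point demanding minor care is matching the joint limit $n,m \longrightarrow \infty$ appearing in the statement with the separate single-index limits delivered by Lemma \ref{lemma:asconvergence}. As the two contributions are additive and each converges in its own index, the joint convergence of their sum follows immediately, so no uniformity or rate arguments are needed.
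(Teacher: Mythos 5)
Your proposal is correct and follows essentially the same route as the paper: bracket the difference via the two-sided bound of Lemma \ref{lemma:lowerandupper} and let the remainder terms vanish by Lemma \ref{lemma:asconvergence}. (You even state the signs more carefully than the paper's own one-line display, which writes the difference in the opposite order; the squeeze is unaffected.)
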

\begin{proof}
Lemma \ref{lemma:lowerandupper} implies that
$$0 \leq d(E_{\mathbb{D}_{n,m}}[P_{1}^*],E_{\mathbb{D}_{n,m}}[P_{2}^*]) - E_{\mathbb{D}_{n,m}}[d(P_{1}^*,P_{2}^*)] \leq E_{\mathbb{D}_{n,m}}[d(P_{1}^*,E_{\mathbb{D}_{n,m}}[P_{1}^*])] +E_{\mathbb{D}_{n,m}}[d(P_{2}^*,E_{\mathbb{D}_{n,m}}[P_{2}^*])].$$
The conclusion follows from taking the limit as $n,m \longrightarrow \infty$ and using Lemma \ref{lemma:asconvergence}.
\end{proof}

\begin{thm}
\label{thm::boundDistanceExpected2}
Under Assumptions \ref{assump::bounded}
and \ref{assump::uniform},
$$\Bigg \lvert \mbox{WIKS}(\mathbb{D}_{n,m})- Z^{n,m}(\mathbb{D}_{n,m}) \Bigg \rvert  \xrightarrow{n,m \longrightarrow \infty} 0, 
$$
where $Z^{n,m}(\mathbb{D}_{n,m})$ is defined in Theorem \ref{thm::boundDistanceExpected}.
\end{thm}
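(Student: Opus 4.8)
The plan is to control the target quantity by a triangle inequality, inserting the expected-measure distance $d(E_{\mathbb{D}_{n,m}}[P_{1}^*],E_{\mathbb{D}_{n,m}}[P_{2}^*])$ as an intermediate term:
\begin{equation*}
\bigl|\mbox{WIKS}(\mathbb{D}_{n,m})- Z^{n,m}(\mathbb{D}_{n,m})\bigr|
\leq
\bigl|\mbox{WIKS}(\mathbb{D}_{n,m})- d(E_{\mathbb{D}_{n,m}}[P_{1}^*],E_{\mathbb{D}_{n,m}}[P_{2}^*])\bigr|
+
\bigl|d(E_{\mathbb{D}_{n,m}}[P_{1}^*],E_{\mathbb{D}_{n,m}}[P_{2}^*])- Z^{n,m}(\mathbb{D}_{n,m})\bigr|.
\end{equation*}
It then suffices to show that each of the two terms on the right vanishes as $n,m\longrightarrow\infty$, and the work has essentially already been done in the preceding results.

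For the first term I would invoke Theorem \ref{thm:limitjensen}, which establishes that the signed difference $d(E_{\mathbb{D}_{n,m}}[P_{1}^*],E_{\mathbb{D}_{n,m}}[P_{2}^*])-\mbox{WIKS}(\mathbb{D}_{n,m})$ tends to $0$. To pass from the signed difference to its absolute value, I would use the lower bound of Lemma \ref{lemma:lowerandupper}, namely $d(E_{\mathbb{D}_{n,m}}[P_{1}^*],E_{\mathbb{D}_{n,m}}[P_{2}^*])\leq\mbox{WIKS}(\mathbb{D}_{n,m})$: this shows the difference is nonpositive, so its modulus equals $\mbox{WIKS}(\mathbb{D}_{n,m})-d(E_{\mathbb{D}_{n,m}}[P_{1}^*],E_{\mathbb{D}_{n,m}}[P_{2}^*])$ and hence also converges to $0$.

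For the second term, Theorem \ref{thm::boundDistanceExpected}(ii) supplies the deterministic bound $K|m-n|/\bigl((K+m)(K+n)\bigr)$. I would then verify elementarily that this correction decays: using $|m-n|\leq m+n$ and splitting, one gets $K|m-n|/\bigl((K+m)(K+n)\bigr)\leq K/(K+n)+K/(K+m)$, and both summands vanish as $n,m\longrightarrow\infty$. Note that this step needs only that both sample sizes grow, with no restriction on their ratio. Combining the two limits with the triangle inequality above yields the claim.

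I expect no genuine conceptual obstacle here, since the two substantive inputs are already in hand: the Jensen-gap estimate (Theorem \ref{thm:limitjensen}, built on the $L^2$-to-sup control in Lemma \ref{lemma:asconvergence}) and the empirical-measure comparison (Theorem \ref{thm::boundDistanceExpected}(ii)). The only care required is the bookkeeping of the absolute value via the ordering in Lemma \ref{lemma:lowerandupper} and the routine verification that the deterministic term $K|m-n|/\bigl((K+m)(K+n)\bigr)$ decays; both are straightforward.
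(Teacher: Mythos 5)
Your proposal is correct and follows essentially the same route as the paper's proof: the same triangle inequality through $d(E_{\mathbb{D}_{n,m}}[P_{1}^*],E_{\mathbb{D}_{n,m}}[P_{2}^*])$, with Theorem \ref{thm:limitjensen} handling one term and Theorem \ref{thm::boundDistanceExpected}(ii) the other. Your explicit handling of the sign of the Jensen gap and your elementary verification that $K|m-n|/((K+m)(K+n))\to 0$ are small refinements of details the paper leaves implicit, not a different argument.
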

\begin{proof}
By Theorem \ref{thm::boundDistanceExpected}, 
\begin{align}
\label{eq:limitdist}
\Bigg \lvert d\big(E_{\mathbb{D}_{n,m}}[P_{1}^*], E_{\mathbb{D}_{n,m}}[P_{2}^*]\big)- Z^{n,m}(\mathbb{D}_{n,m}) \Bigg \rvert  \leq \dfrac{K|m - n|}{(K + m)(K + n)}
 \xrightarrow{n,m \longrightarrow \infty}  0
\end{align}
Now,
\begin{align*}
0 &\leq \Bigg \lvert \mbox{WIKS}(\mathbb{D}_{n,m})- Z^{n,m}(\mathbb{D}_{n,m}) \Bigg \rvert  \leq  \\
&\Bigg \lvert \mbox{WIKS}(\mathbb{D}_{n,m})-d\big(E_{\mathbb{D}_{n,m}}[P_{1}^*], E_{\mathbb{D}_{n,m}}[P_{2}^*]\big)+d\big(E_{\mathbb{D}_{n,m}}[P_{1}^*], E_{\mathbb{D}_{n,m}}[P_{2}^*]\big)- Z^{n,m}(\mathbb{D}_{n,m}) \Bigg \rvert  \\
&\leq
\Bigg \lvert \mbox{WIKS}(\mathbb{D}_{n,m})-d\big(E_{\mathbb{D}_{n,m}}[P_{1}^*], E_{\mathbb{D}_{n,m}}[P_{2}^*]\big) \Bigg \rvert  +
\Bigg \lvert d\big(E_{\mathbb{D}_{n,m}}[P_{1}^*], E_{\mathbb{D}_{n,m}}[P_{2}^*]\big)- Z^{n,m}(\mathbb{D}_{n,m})\Bigg \rvert 
 \xrightarrow{n,m \longrightarrow \infty}  0,  
\end{align*}
where the computation of the limit follows from 
Theorem \ref{thm:limitjensen} and Equation \ref{eq:limitdist}.
\end{proof}

\begin{proof}[Proof of Corollary \ref{cor:invariant}]

Fix $x \in \mathbb{R}$ and
assume that $H_X=H_Y=H$. By Theorem \ref{thm::boundDistanceExpected}
and the lower bound of Lemma \ref{lemma:lowerandupper},
\begin{align}
\label{eq:wiksupper}
P\left(\mbox{WIKS}(\mathbb{D}_{n,m}) \leq F^{-1}_{Z^{n,m}(\mathbb{U})}(x) \right) \leq  
P\left( Z^{n,m}(\mathbb{D}_{n,m})  \leq F^{-1}_{Z^{n,m}(\mathbb{U})}(x)+C_{n,m} \right),
\end{align}
where $C_{n,m}= \dfrac{K|m - n|}{(K + m)(K + n)}$. 
Let $K_{n,m}=\sqrt{mn/(m+n)}$.
Then 
\begin{align}
\label{eq:leftlimit}
K_{n,m}\times Z^{n,m}(\mathbb{D}_{n,m}) \xrightarrow[n,m \longrightarrow \infty]{D} Z,
\end{align}
where $Z$ is the Kolmogorov distribution \citep{raghavachari1973limiting}. 
Because under $H_0$ $Z^{n,m}(\mathbb{D}_{n,m}) \sim Z^{n,m}(\mathbb{U})$ and $F_Z$ is a continuous function, it also follows that
$$K_{n,m} \times F^{-1}_{Z^{n,m}(\mathbb{U})}(x) \xrightarrow[n,m \longrightarrow \infty]{D} F^{-1}_{Z}(x).$$
Moreover,  
$$K_{n,m} \times C_{n,m} \xrightarrow{n,m \longrightarrow \infty} 0,$$
and therefore
\begin{align}
\label{eq:rightlimit}
F^{-1}_{Z^{n,m}(\mathbb{U})}(x)+C_{n,m} \xrightarrow{n,m \longrightarrow \infty} F^{-1}_{Z}(x).
\end{align} 
Using Slutsky's Theorem, Equation \ref{eq:leftlimit} and Equation \ref{eq:rightlimit}, conclude that
\begin{align*}
&P\left( Z^{n,m}(\mathbb{D}_{n,m})  \leq F^{-1}_{Z^{n,m}(\mathbb{U})}(x)+C_{n,m} \right) \\
&=
P\left(K_{n,m}\times Z^{n,m}(\mathbb{D}_{n,m})  \leq K_{n,m}\times F^{-1}_{Z^{n,m}(\mathbb{U})}(x)+K_{n,m}\times C_{n,m} \right)
\xrightarrow[n,m \longrightarrow \infty]{D} F_Z\left(F^{-1}_{Z}(x)\right)=x.
\end{align*}
Conclude from Equation \ref{eq:wiksupper} that
$\lim_{n,m} P\left(\mbox{WIKS}(\mathbb{D}_{n,m}) \leq F^{-1}_{Z^{n,m}(\mathbb{U})}(x) \right) \leq x$.

Now, by Theorem \ref{thm::boundDistanceExpected}
and the upper bound of Lemma \ref{lemma:lowerandupper},
\begin{align*}
&P\left(\mbox{WIKS}(\mathbb{D}_{n,m}) \leq F^{-1}_{Z^{n,m}(\mathbb{U})}(x) \right) \geq  \\
&P\left( Z^{n,m}(\mathbb{D}_{n,m})  \leq F^{-1}_{Z^{n,m}(\mathbb{U})}(x)-C_{n,m}-E_{\mathbb{D}_{n,m}}[d(P_{1}^*,E_{\mathbb{D}_{n,m}}[P_{1}^*])] -E_{\mathbb{D}_{n,m}}[d(P_{2}^*,E_{\mathbb{D}_{n,m}}[P_{2}^*])] \right).
\end{align*}
Using the same strategy as in the upper bound,
conclude that
$$\lim_{n,m}  P\left(\mbox{WIKS}(\mathbb{D}_{n,m}) \leq F^{-1}_{Z^{n,m}(\mathbb{U})}(x) \right) \geq  x,$$
which concludes the proof of the theorem.
\end{proof}

\begin{Lemma}
\label{lemma:consistKS}
$$ Z^{n,m}(\mathbb{D}_{n,m}) \xrightarrow[n,m \longrightarrow \infty]{a.s.}  \sup_{x \in \mathbb{R}}|H_X(x)-H_Y(x)|$$
\end{Lemma}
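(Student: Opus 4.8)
The plan is to reduce the claim to the classical Glivenko--Cantelli theorem after stripping away the effect of the concentration parameter $K$ in the denominators. First I would introduce the ordinary empirical distribution functions $\widehat{H}_X^n(x)=\frac1n\sum_{i=1}^n I_{(-\infty,x]}(X_i)$ and $\widehat{H}_Y^m(x)=\frac1m\sum_{j=1}^m I_{(-\infty,x]}(Y_j)$, and rewrite
$$
Z^{n,m}(\mathbb{D}_{n,m})=\sup_{x\in\mathbb{R}}\left|\frac{n}{K+n}\widehat{H}_X^n(x)-\frac{m}{K+m}\widehat{H}_Y^m(x)\right|.
$$
Since the target quantity is $\sup_x|H_X(x)-H_Y(x)|$, the entire proof reduces to showing that the function inside the supremum converges uniformly, almost surely, to $H_X-H_Y$.

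The key inequality I would invoke is the reverse triangle inequality for the uniform norm: for any bounded functions $f,g$ one has $\big|\sup_x|f(x)|-\sup_x|g(x)|\big|\le\sup_x|f(x)-g(x)|$. Applying this with $f=\frac{n}{K+n}\widehat{H}_X^n-\frac{m}{K+m}\widehat{H}_Y^m$ and $g=H_X-H_Y$, and then splitting the two independent samples with an ordinary triangle inequality, yields
$$
\big|Z^{n,m}(\mathbb{D}_{n,m})-\sup_x|H_X(x)-H_Y(x)|\big|\le \sup_x\left|\frac{n}{K+n}\widehat{H}_X^n(x)-H_X(x)\right|+\sup_x\left|\frac{m}{K+m}\widehat{H}_Y^m(x)-H_Y(x)\right|.
$$

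Each of the two sup-terms I would control through the decomposition $\frac{n}{K+n}\widehat{H}_X^n(x)-H_X(x)=\frac{n}{K+n}\big(\widehat{H}_X^n(x)-H_X(x)\big)-\frac{K}{K+n}H_X(x)$, which (using $\tfrac{n}{K+n}\le 1$ and $\|H_X\|_\infty=1$ since $H_X$ is a CDF) gives the bound $\|\widehat{H}_X^n-H_X\|_\infty+\frac{K}{K+n}$. The fluctuation term $\|\widehat{H}_X^n-H_X\|_\infty\to 0$ almost surely by Glivenko--Cantelli, while the deterministic bias $\frac{K}{K+n}\to 0$; the $Y$-term is treated identically as $m\to\infty$. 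Hence the right-hand side of the displayed inequality tends to $0$ almost surely, which proves the lemma.

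There is no substantive obstacle here: the argument is a bookkeeping exercise layered on top of Glivenko--Cantelli. The only point requiring a little care is keeping the two distinct sources of error separated -- the random empirical fluctuation $\|\widehat{H}_X^n-H_X\|_\infty$ and the deterministic bias $K/(K+n)$ introduced by the Dirichlet normalization -- so that each can be sent to zero on its own. I would also note that the almost sure convergence holds jointly in $n$ and $m$ because the two samples are independent and each marginal Glivenko--Cantelli statement holds on a probability-one event, so the conclusion holds on the intersection of those events.
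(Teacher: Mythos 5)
Your proof is correct and takes essentially the same route as the paper, whose entire argument is the one-line remark that the result ``follows from the strong law of large numbers.'' Your version simply fills in the details that remark leaves implicit -- invoking the Glivenko--Cantelli theorem (the uniform strengthening of the SLLN actually needed for the supremum), separating the empirical fluctuation from the deterministic $K/(K+n)$ bias, and noting that the two probability-one events can be intersected -- so it is a faithful, and more careful, elaboration of the paper's intended argument.
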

\begin{proof}
It follows from the strong law of large numbers.
\end{proof}

\begin{proof}[Proof  of Corollary \ref{cor::consist}] 
It follows from Theorem \ref{thm::boundDistanceExpected2} and Lemma \ref{lemma:consistKS}.
\end{proof}

\begin{proof}[Proof of Corollary \ref{cor::consist2}] 

The first limit follows directly from Corollary \ref{cor:invariant}.
We now prove the second limit.
Corollary \ref{cor::consist} implies that if $H_1$ holds,
$$\mbox{WIKS}(\mathbb{D}_{n,m})  \xrightarrow[n,m \longrightarrow \infty]{P}  a,$$
and hence 
\begin{align}
\label{eq::wiksH0}
\mbox{WIKS}(\mathbb{D}_{n,m})-a  \xrightarrow[n,m \longrightarrow \infty]{P}  0
\end{align}
for some $a>0$. Moreover, by Lemma \ref{lemma:consistKS},
$$ Z^{n,m}(\mathbb{U}) \xrightarrow[n,m \longrightarrow \infty]{P}  0,$$
which implies that 
\begin{align}
\label{eq::inverselimit}
F^{-1}_{Z^{n,m}(\mathbb{U})}(1-\alpha)  \xrightarrow[n,m \longrightarrow \infty]{P}   0.
\end{align} 
It follows from Equations \ref{eq::wiksH0} and \ref{eq::inverselimit} that
for every $\epsilon>0,$
\begin{align*}
P\left(\mbox{WIKS}(\mathbb{D}_{n,m})-F^{-1}_{Z^{n,m}(\mathbb{U})}\geq a - \epsilon\right) \geq P\left(|\mbox{WIKS}(\mathbb{D}_{n,m})-a-F^{-1}_{Z^{n,m}(\mathbb{U})}(1-\alpha)|\leq \epsilon\right)\longrightarrow 1.
\end{align*}
The conclusion follows from taking $\epsilon=a$.
\end{proof}
  
\end{document}